\newlength{\defbaselineskip}
\definecolor{caribbeangreen}{rgb}{0.0, 0.8, 0.6}
\definecolor{darkpastelgreen}{rgb}{0.01, 0.75, 0.24}
\definecolor{green(pigment)}{rgb}{0.0, 0.65, 0.31}
\crefname{section}{ Section}{ Sections}
\crefname{subsection}{ Subsection}{ Subsections}
\crefname{appendix}{ Appendix}{ Appendices}
\crefname{figure}{ Figure}{ Figures}
\crefname{equation}{}{}
\crefname{definition}{ Definition}{ Definitions}
\crefname{defin}{ Definition}{ Definitions}
\crefname{theorem}{ Theorem}{ Theorems}
\crefname{proposition}{ Proposition}{ Propositions}
\crefname{corollary}{ Corollary}{ Corollaries}
\crefname{remark}{ Remark}{ Remarks}
\crefname{lemma}{ Lemma}{ Lemmata}
\crefname{theoa}{ Theorem}{ Theorems}
\crefname{lemb}{ Lemma}{ Lemmata}
\pgfplotsset{width=7cm,compat=newest} 
\newtheoremstyle{mytheoremstyle} 
{.5em}                    
{0cm}                    
{\slshape}                   
{}                           
{\bf}             
{.}                          
{.5em}                       
{}  
\theoremstyle{mytheoremstyle}
\newtheorem{theorem}{Theorem}[section]
\newtheorem{proposition}[theorem]{Proposition}
\newtheorem{defin}[theorem]{Definition}
\newtheorem{remark}[theorem]{Remark}
\numberwithin{equation}{section}
\long\def\salta#1{\relax}
\newcommand\redout{\bgroup\markoverwith
{\textcolor{bor}{\rule[0.5ex]{2pt}{0.8pt}}}\ULon}
\def\og{\leavevmode\raise.3ex\hbox{$\scriptscriptstyle\langle\!\langle$~}}
\def\fg{\leavevmode\raise.3ex\hbox{~$\!\scriptscriptstyle\,\rangle\!\rangle$}}
\newmdenv[
  topline=false,
  bottomline=false,
  rightline=false,
  skipabove=\topsep,
  skipbelow=\topsep,
  leftmargin=-10pt,
  rightmargin=-10pt,
  innertopmargin=4pt,
  innerbottommargin=4pt,
  linecolor=red,
  linewidth=2pt,
]{siderules}
\def\mA{\mathscr{A}}
\def\mW{\mathscr{W}}
\def\mH{\mathscr{H}}
\def\mL{\mathscr{L}}
\newcommand\reallywidehat[1]{%
\savestack{\tmpbox}{\stretchto{%
  \scaleto{%
    \scalerel*[\widthof{\ensuremath{#1}}]{\kern-.6pt\bigwedge\kern-.6pt}%
    {\rule[-\textheight/2]{1ex}{\textheight}}
  }{\textheight}%
}{0.5ex}}%
\stackon[2pt]{#1}{\tmpbox}%
}
\def\TT{{\mathbb{T}^d}}
\def\ZZ{{\mathbb{Z}^d}}
\def\pat{\partial_t}
\def\fv{\widehat{v}}
\def\fa{\widehat{\mathcal{N}}}
\def\fn{\widehat{\mathcal{N}}}
\def\Re{\mathit{Re}}
\newcommand{\Vast}{\bBigg@{3}}
\newcommand{\vast}{\bBigg@{2}}
\def\de{\delta}
\def\eps{\varepsilon}
\def\D{\Delta }
\def\vp{\varphi}
\def\al{\alpha}
\def\de{\delta}
\def\vp{\varphi}
\newcommand{\N}{\nabla}
\newcommand{\ds}{\displaystyle}
\def\qq{\qquad}
\def\q{\quad}
\definecolor{bor}{cmyk}{0.21,0.93,0.86,0.12}
\definecolor{air}{rgb}{0.178, 0.51, 0.51}
\definecolor{range}{cmyk}{0,0.599,1,0.188}
\def\ds{\displaystyle}
\definecolor{apple}{rgb}{0.55, 0.71, 0.0}
\DeclareMathOperator*{\esssup}{ess\,sup}
\title{Global existence and decay to equilibrium for some crystal surface models}
\author[R. Granero-Belinch\'{o}n]{Rafael Granero-Belinch\'{o}n}
\email{rafael.granero@unican.es}
\address{Departamento  de  Matem\'aticas,  Estad\'istica  y  Computaci\'on,  Universidad  de Cantabria.  Avda.  Los  Castros  s/n,  Santander,  Spain.}
\author[M. Magliocca]{Martina Magliocca}
\email{magliocc@mat.uniroma2.it}
\address{Dipartimento di Matematica, Universit\`a degli Studi Tor Vergata, Via della Ricerca Scientifica 1, 00133 Rome, Italy.}
\keywords{Crystal surface model, nonlinear fourth-order parabolic equations, global existence, decay to equilibrium} \subjclass[2000]{
35A01,  
35B40, 
35G25,  
35K30,  
35K55.}
\begin{document}

\begin{abstract}
In this paper we study the large time behavior of the solutions to the following nonlinear fourth-order equations
$$
\pat u=\D e^{-\D u},
$$
$$
\pat u=-u^2\D^2(u^3).
$$
These two PDE were proposed as models of the evolution of crystal surfaces by J. Krug, H.T. Dobbs, and S. Majaniemi (\emph{Z. Phys. B}, 97, 281-291, 1995) and H. Al Hajj Shehadeh, R. V. Kohn, and J. Weare (\emph{Phys. D}, 240, 1771-1784, 2011), respectively. In particular, we find explicitly computable conditions on the size of the initial data (measured in terms of the norm in a critical space) guaranteeing the global existence and exponential decay to equilibrium in the Wiener algebra and in Sobolev spaces. 
\end{abstract}

\maketitle
{\small
\tableofcontents}

\section{Introduction}
Crystal films are important in many modern electronic devices (as mobile phone antennae) and nano\-technology. Thus, the evolution of a crystal surface is an interesting topic with important applications. The purpose of this paper is to find explicitly computable conditions guaranteeing the global existence and exponential decay to equilibrium of the solutions of certain PDE models of crystal surfaces. 

\medskip

The first problem we deal with is 
\begin{equation}\label{1u} 
\begin{cases}
\begin{array}{lll}
\ds \pat u=\D e^{-\D u}& \text{ in }  [0,T]\times\TT ,&\\
\ds u(0,x)=u_0(x) &\text{ in }\TT,&
\end{array}
\end{cases}
\end{equation}
where the initial datum $u_0$ satisfies
$$
\int_\TT u_0(x)\,dx=0
$$
and $\TT=[-\pi,\pi]^d$ is the $d$-dimensional torus ($d=1,2$) and $0<T<\infty$. This model was suggested by Krug, Dobbs, \& Majaniemi \cite{krug1995adatom} (equations (4.4) and (4.5) in \cite{krug1995adatom}) (see also Marzuola \& Weare \cite{marzuola2013relaxation}) as a description (under certain physical assumptions and simplifications) of the large scale evolution of the crystal surface.

This equation has been previously studied in the mathematical literature. In particular, Liu \& Xu \cite{liu2016existence} obtained the existence of weak solutions starting from arbitrary initial data. Moreover, they define a stationary solution having a curvature singularity evidencing that the solution may develop singularities. Gao, Liu, \& Lu \cite{gao2017gradient} used the gradient flow approach to study the solution of \eqref{1u}. In their paper, the solution (which exists globally in time and emanates from an initial data with arbitrary size) is allowed to have a singularity at the level of second derivatives (the laplacian of the solution is allowed to be a Radon measure). Let us also mention that when the exponential in \eqref{1u} is linearized and the Laplacian is replaced by the $p-$Laplacian, the resulting equation has been studied by Giga \& Kohn \cite{giga2010scale} (see also the recent preprint by Xu \cite{xu2017existence}).

Short after this paper was posted, two new papers studying \eqref{1u} appeared, one by Jian-Guo Liu \& Robert Strain \cite{LS} and another by David Ambrose \cite{Ambrose}. We refer to the discussion section below for more details about these works and some words comparing our results and theirs.

\medskip

The second problem we deal with is
\begin{equation}\label{2u} 
\begin{cases}
\begin{array}{lll}
\ds\pat u=-u^2\D^2(u^3)&  \text{ in }  [0,T]\times\TT,  &\\
\ds u(0,x)=u_0(x)&\text{ in }  \TT,&
\end{array}
\end{cases}
\end{equation}
with periodic boundary conditions and where $u_0> 0$. Using that 
$$
\int_\TT \frac{1}{u(x,t)}\,dx=\int_\TT \frac{1}{u_0(x)}\,dx,
$$
we can rescale variables and, without lossing generality, assume that $\int_\TT 1/u(x,t)\,dx=1$. This equations was proposed by Shehadeh, Kohn \& Weare \cite{shehadeh2011evolution} (equation (1.1) in \cite{shehadeh2011evolution}) as a continuous description of the slope of the crystal surface (as a function of height and time) in the so-called Attachment-Detachment-Limited regime (see Appendix A in \cite{shehadeh2011evolution} and the references therein for further explanation on the physics behind the model).

When the extra term $-\alpha u^2\D^2u$ is added \eqref{2u} reads 
\begin{equation}\label{2ub} 
\begin{cases}
\begin{array}{lll}
\ds\pat u=-u^2\D^2(u^3+\alpha u)&  \text{ in }  [0,T]\times\TT,  &\\
\ds u(0,x)=u_0(x)&\text{ in }  \TT.&
\end{array}
\end{cases}
\end{equation}
This problem has been studied in the mathematical literature by Gao, Ji, Liu \& Witelski \cite{gao2017vicinal}. In particular, when the domain is one-dimensional and $\alpha>0$, by using the two Lyapunov functionals
$$
E_1(t)=\int \frac{1}{2}u^2+\alpha \log(u)\,dx,
$$
$$
E_2(t)=\int (\partial_x^2(u^3+\alpha u))^2\,dx,
$$
Gao, Ji, Liu \& Witelski proved the global existence of positive solutions to \eqref{2ub}. Furthermore, these authors also proved that $u(t)$ tends (at an unspecified rate) towards to certain constant steady state (which depends on the initial data).

When $\alpha=0$, Gao, Liu, \& Lu \cite{gao2017weak} proved the global existence of weak solutions to \eqref{2u} for non-negative initial data such that $u_0^3\in \mathscr{H}^2$. Moreover, in \cite{gao2017weak} the convergence of the weak solution towards a constant steady state is also obtained (at an unspecified rate).

We observe that \eqref{2u} is equivalent to
\begin{equation}\label{4} 
\begin{cases}
\begin{array}{lll}
\ds \pat w=\D((\D w)^{-3} )&  \text{ in }  [0,T]\times\TT , &\\
\ds w(0,x)=w_0(x) &\text{ in }  \TT,&
\end{array}
\end{cases}
\end{equation}
under the change of variables $\D w=1/u$. Equation \eqref{4} was studied by Liu \& Xu \cite{liu2017analytical}. In this paper, the authors proved the existence of global weak solutions. 

\medskip

Under the change of variable $v=\D u$, \eqref{1u} becomes
\begin{equation}\label{1v} 
\begin{cases}
\begin{array}{lll}
\ds \pat v=\D^2 e^{-v}& \text{ in }  [0,T]\times\TT ,&\\
\ds v(0,x)=v_0(x) &\text{ in }\TT.&
\end{array}
\end{cases}
\end{equation}
Due to the definition of $v$ and the periodic boundary conditions, we have that
$$
\int_\TT v_0(x)\,dx=\int_\TT \D u_0(x)\,dx=0.
$$

The interest in problem \eqref{2u} is strictly linked to 
\begin{equation}\label{2v} 
\begin{cases}
\begin{array}{lll}
\ds\pat v=\,\D^2 \left(\frac{1}{(1+v)^3} \right)  & \text{ in }  [0,T]\times\TT, &\\
\ds v(0,x)=v_0(x) & \text{ in }  \TT.&
\end{array}
\end{cases}
\end{equation}
Indeed, \eqref{2u} becomes \eqref{2v} under the change of variable $\frac{1}{u}=1+v$, with $v$ such that
$$
\int_\TT v(t,x)\,dx=0.
$$

Using the formulations \eqref{1v} and \eqref{2v} allow us to find two Lyapunov functionals $\mathcal{L}_1$ (for problem \eqref{1v}) and $\mathcal{L}_2$ (for problem \eqref{2v}):
$$
\mathcal{L}_1(v)=\int_\TT e^{-v(t,x)}\,dx,
$$
$$
\mathcal{L}_2(v)=\int_\TT \frac{1}{(1+v(t,x))^2}\,dx.
$$
Let us remark that \eqref{1v} and \eqref{2v} (when the domain is $\mathbb{R}^d$ to simplify the exposition) are invariant by the scaling
\begin{equation}\label{scaling} 
v_\lambda(t,x)=v(\lambda^4 t,\lambda x).
\end{equation}

In the rest of the paper we will use the formulations \eqref{1v} and \eqref{2v} to prove several global existence results. The main contribution is a global existence and decay in the Wiener algebra $\mathscr{A}^0$ (see \eqref{Wienerhomo} below for the proper definition), for initial data satisfying certain explicit size restrictions. We note that the scaling \eqref{scaling} leaves the Wiener algebra's norm invariant. Indeed, in the case where the domain is $\mathbb{R}^d$,
\begin{align*}
\|v_\lambda(t)\|_{\mathscr{A}^0}&=\int_{\mathbb{R}^d}|\widehat{v_\lambda}(t,k)|dk\\
&=\frac{1}{(2\pi)^{d/2}}\int_{\mathbb{R}^d}\left|\int_{\mathbb{R}^d}v(\lambda^4 t,\lambda x)e^{i\lambda x\cdot \frac{k}{\lambda}}\,dx\right|dk\\
&=\frac{1}{\lambda^d}\int_{\mathbb{R}^d}\left|\widehat{v}\left(\lambda^4 t,\frac{k}{\lambda}\right)\right|dk\\
&=\|v(\lambda^4 t)\|_{\mathscr{A}^0}.
\end{align*}
Although there are several available definitions of \emph{critical spaces} for PDEs, if the underlying PDE has a scaling invariance, \emph{critical spaces} are often introduced in the literature as the scaling invariant spaces. Here, we follow that definition, \emph{i.e.} by \emph{critical space} we mean that the scaling of the equation matches the scaling of the norm in that space. Thus, in this sense, our Theorems \ref{theorem1} and \ref{teo2ex} are global existence results in a \emph{critical space} for problems \eqref{1v} and \eqref{2v}. Besides this criticality of the space, the use of the Wiener algebra allow us to get estimates for higher space derivatives at the cost of time integrability. For instance, for the linear problem
$$
\pat f=-\Delta^2 f,
$$
using standard $\mL^2$ estimates one get the boundedness of the solution in $\mL^2(0,T;\mH^2)$. However, if one uses estimates in the Wiener algebra $\mA^0$ (which of course will impose better regularity assumptions on the initial data) one can prove boundedness in $\mL^1(0,T;\mA^4)$. Using the decay in the Wiener algebra, we also provide a global existence and decay for initial data in Sobolev spaces. The results for the original problems \eqref{1u} and \eqref{2u} follow easily (see the discussion section below). 

Our approach is very adaptable and can lead to advances in other systems of PDE. For instance, it has been used Bruell \& Granero-Belinch\'on to study the evolution of thin films in Darcy and Stokes flows \cite{BruellGranero2018} by C\'ordoba and Gancedo \cite{cordoba2007contour}, Constantin, C{\'o}rdoba, Gancedo, Rodriguez-Piazza, \& Strain \cite{constantin2016muskat} for the Muskat problem (see also \cite{gancedo2017muskat} and \cite{patel2017large}), by Burczak \& Granero-Belinch\'on \cite{burczak2016generalized} to analyze the Keller-Segel system of PDE with diffusion given by a nonlocal operator and by Bae, Granero-Belinch\'on \& Lazar \cite{bae2016global} to prove several global existence results (with infinite $L^p$ energy) for nonlocal transport equations.

\subsection{Notation \& Basic Tools }

Recalling the expression of the $k-$th Fourier coefficient of a $2\pi-$periodic function $u$
$$
\widehat{u}(k)=\frac{1}{(2\pi)^d}\int_{\TT}u(x)e^{-ix\cdot k}\,dx,
$$
we have the Fourier series representation
$$
u(t,x)=\sum_{k\in\ZZ}\widehat{u}(k,t)e^{ix\cdot k}.
$$
Note that, dropping $t$ from the notation, we have the following well-known facts:
\begin{align*}
\|u\|_{\mL^2(\TT)}^2=(2\pi)^d\sum_{k\in\ZZ}|\widehat{u}(k)|^2,\qquad
\widehat{uv}(k)=\sum_{j\in\ZZ}\widehat{u}(j)\widehat{v}(k-j).
\end{align*}
Let $n\in \ZZ^+$ and denote by
$$
\mW^{n,p}(\TT)=\left\{u\in \mL^p(\TT),\, \partial_x^{n} u\in \mL^p(\TT)\right\}
$$
the standard $\mL^p$-based Sobolev space with norm
$$
\|u\|_{\mW^{n,p}(\TT)}^p=\|u\|_{\mL^p}^p+\|\partial_x^n u\|_{\mL^p}^p,
$$
being $\partial$ a differential operator of order 1 with respect to a spatial variable. Then, we define the $\mL^2$-based Sobolev spaces $\mH^\alpha(\TT)$
\begin{equation*}\label{Sobhomo}
\mH^\alpha(\TT)=\left\{u\in \mL^2(\TT),\q \|u\|_{\mH^\al(\TT)}^2:=\sum_{k\in\ZZ}|k|^{2\alpha}|\widehat{u}(k)|^2<\infty
\right\}.
\end{equation*}
In a similar way, we consider the Wiener spaces $\mA^\alpha(\TT)$ as
\begin{equation}\label{Wienerhomo}
\mA^\alpha(\TT)=\left\{u\in \mL^1(\TT),\q \|u\|_{\mA^\al(\TT)}:=\sum_{k\in\ZZ} |k|^\alpha|\widehat{u}(k)|<\infty\right\}.
\end{equation}
We note that
$$
|\partial^n f(x)|\leq \sum_{n\in\mathbb{Z}^d}|k|^n|\hat{f}(n)|=\|f\|_{\mathscr{A}^n}.
$$
We simplify the notation rewriting the Lebesgue, Sobolev and Wiener norms as
\begin{align*}
\|u\|_{\mL^\al(\TT)}=\|u\|_{\mL^\al},& \qquad
\|u\|_{\mW^{n,p}(\TT)}=\|u\|_{\mW^{n,p}}, \\
\|u\|_{\mH^\al(\TT)}=\|u\|_{\mH^\al}, &\qq
\|u\|_{\mA^\al(\TT)}=|u|_{\al}.
\end{align*}
Similarly we set
$$
\|u\|_{\mL^\al(0,T;\mL^\beta(\TT))}=\|u\|_{\mL^\al(\mL^\beta)}.
$$
We recall the following inequalities: 
\begin{align}
|u|_{s}&\leq |u|_{0}^{1-\theta}|u|_{r}^{\theta}\q\forall 0\leq s\leq r\,,\;\theta=\frac{s}{r}.\label{interpolation}
\end{align}
We also introduce the space of Radon measures from an interval $[0,T]$ to a Banach space $X$,
$$
\mathcal{M}(0,T;X).
$$
We denote with $c,\,C$ positive constants which may vary from line to line during the proofs. Finally, for $j=1,\ldots,d$, we write 
$$
u,_{j}=\frac{\partial u}{\partial x_j}
$$
and we adopt Einstein convention for summation.

\section{Main Results \& discussion}

\subsection{Main results of problem \eqref{1v}}

We consider the following definition of weak solutions for problem \eqref{1v}.

\begin{defin}\label{definition1} 
We say that the function $v\in \mL^\infty(0,T;\mL^\infty(\TT))$ is a weak solution of \eqref{1v} with initial data $v_0$ if 
$$
\int_\TT \varphi(0,x)v_0(x)\,dx-\int_0^T\int_\TT \pat \varphi(t,x)v(t,x)+ \D^2\varphi(t,x)e^{-v(t,x)}\,dx\,dt=0,
$$
for all $\varphi\in \mW^{1,1}(0,T;\mL^1(\TT))\cap \mL^1(0,T;\mW^{4,1}(\TT))$.
\end{defin}

We prove that
\begin{theorem}\label{theorem1}
Let $v_0\in \mA^0(\TT)$ be such that the value
\begin{equation}\label{small1}
\de(|v_0|_0)=2-e^{|v_0|_0}-7|v_0|_0e^{|v_0|_0}-6|v_0|_0^2e^{|v_0|_0}-|v_0|_0^3e^{|v_0|_0}
\end{equation}
satisfies $0<\de(|v_0|_{0})$. Then, there exist at least one global weak solution in the sense of Definition \ref{definition1} to equation \eqref{1v} having the regularity
\begin{equation}\label{ex1}
v\in  \mL^\infty\left([0,T]\times \TT\right)\cap \mL^{\frac{4}{3}}\left(0,T;\mW^{3,\infty}(\TT)\right)\cap \mathcal{M}\left(0,T;\mW^{4,\infty}(\TT)\right)\cap \mL^2\left(0,T;{\mH}^2(\TT)\right)
\end{equation}
for any $T>0$. Furthermore, the solution satisfies
$$
v\in \mathscr{C}([0,T];\mL^2(\TT)),
$$
\begin{equation}\label{dec1A0}
\|v(t)\|_{\mL^\infty}\leq |v_0|_0e^{-\delta(|v_0|_0) t}\qq\forall t\in[0,T],
\end{equation}
for $\delta(|v_0|_0)$ defined in \eqref{small1}.
\end{theorem}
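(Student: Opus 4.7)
The plan is to run the entire analysis on the Fourier side in the Wiener algebra $\mA^0$. The mean-zero condition $\int_\TT v_0\,dx=0$ is preserved by \eqref{1v} and $\widehat{1}(k)=0$ for $k\ne0$, so expanding $e^{-v}=1-v+\sum_{j\ge 2}\frac{(-v)^j}{j!}$ turns the PDE into
$$
\pat\widehat{v}(t,k)=-|k|^4\widehat{v}(t,k)+|k|^4\sum_{j\ge 2}\frac{(-1)^j}{j!}\widehat{v^j}(t,k),\qq k\in\ZZ\setminus\{0\}.
$$
The first term provides genuine bi-Laplacian dissipation while the nonlinear tail is to be controlled by the convolution structure $\widehat{v^j}=\widehat{v}^{*j}$.

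The heart of the proof is to establish the pointwise-in-time inequality
$$
\frac{d}{dt}|v|_0+\de(|v|_0)\,|v|_4\;\le\;0.
$$
Starting from $\frac{d}{dt}|\widehat v(k)|=\Re\bigl(\frac{\overline{\widehat v(k)}}{|\widehat v(k)|}\pat\widehat v(k)\bigr)$ (with the customary regularization $\sqrt{|\widehat v(k)|^2+\eta^2}$ at nulls) and summing over $k\ne 0$, the linear part produces $-|v|_4$. For each $j\ge 2$ the estimate $(|k_1|+\cdots+|k_j|)^4\le j^3(|k_1|^4+\cdots+|k_j|^4)$ together with the symmetry of the convolution gives
$$
\sum_k|k|^4|\widehat{v^j}(k)|\;\le\;j^4\,|v|_4\,|v|_0^{j-1},
$$
and summing on $j$ via the elementary identity $\sum_{j\ge 0}\frac{j^4}{j!}x^j=(x+7x^2+6x^3+x^4)e^x$ yields
$$
\sum_{j\ge 2}\frac{1}{j!}\sum_k |k|^4|\widehat{v^j}(k)|\;\le\;|v|_4\bigl[(1+7|v|_0+6|v|_0^2+|v|_0^3)e^{|v|_0}-1\bigr].
$$
Combining with the dissipation leaves exactly $-\de(|v|_0)|v|_4$ with $\de$ as in \eqref{small1}.

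The conclusions follow from this differential inequality. Since $|k|\ge 1$ on $\ZZ\setminus\{0\}$ one has the Poincar\'e-type comparison $|v|_0\le|v|_4$, and differentiation shows $\de$ is strictly decreasing on $[0,\infty)$. A short continuity argument using $\de(|v_0|_0)>0$ then forces $|v(t)|_0\le|v_0|_0$ and $\de(|v(t)|_0)\ge\de(|v_0|_0)$ for all $t$, so Gr\"onwall applied to $\frac{d}{dt}|v|_0\le-\de(|v_0|_0)|v|_0$ together with $\mA^0\hookrightarrow\mL^\infty$ delivers \eqref{dec1A0}. Integrating the differential inequality in time yields $v\in\mL^\infty(\mA^0)\cap\mL^1(\mA^4)$; the remaining regularity in \eqref{ex1} is routine, with $\mL^{4/3}(\mW^{3,\infty})$ coming from \eqref{interpolation} in the form $|v|_3\le|v|_0^{1/4}|v|_4^{3/4}$, $\mathcal{M}(\mW^{4,\infty})$ from $\mA^4\hookrightarrow\mW^{4,\infty}$, and $\mL^2(\mH^2)$ from $\|v\|_{\mH^2}^2\le|v|_0\,|v|_4$.

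To actually produce a solution I would Galerkin-truncate to the first $N$ Fourier modes, obtaining a locally Lipschitz ODE system whose a priori estimate is inherited termwise since Fourier projection is a contraction on $\mA^0$. The resulting uniform bounds in $\mL^\infty(\mA^0)\cap\mL^1(\mA^4)$, together with a control on $\pat v^N$ obtained by testing against $\mW^{4,1}$ functions, yield strong compactness in $\mathscr{C}([0,T];\mL^2)$ via Aubin--Lions, enough to pass to the limit in the weak formulation of \cref{definition1}. The main obstacle I anticipate is precisely this passage to the limit in $\D^2 e^{-v^N}$: the uniform bound $\|v^N\|_{\mL^\infty}\le|v_0|_0$ coming from $\mA^0$ control makes $v\mapsto e^{-v}$ Lipschitz on the relevant range, and after moving both Laplacians onto the test function in the weak formulation the limit reduces to $\mL^1$ convergence of $e^{-v^N}$, which follows from the Aubin--Lions compactness combined with the Lipschitz bound.
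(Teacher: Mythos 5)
Your proposal is correct and arrives at exactly the paper's differential inequality $\frac{d}{dt}|v|_0\le -\de(|v|_0)|v|_4$ with the same constant \eqref{small1}, and the surrounding architecture (a priori estimate in $\mA^0$, continuity/$t^*$ argument, Poincar\'e for the exponential decay, Galerkin approximation, Aubin--Lions compactness through the triple $\mH^2\subset\mL^2\subset\mH^{-2}$, and passage to the limit using the Lipschitz bound on $z\mapsto e^{-z}$ over the range $\|v_N\|_{\mL^\infty}\le|v_0|_0$) coincides with the paper's. Where you genuinely diverge is in the derivation of the key estimate: the paper first expands $\D^2(v^j)$ by the Leibniz rule in physical space into seven families of terms $\fn^j_1,\dots,\fn^j_7$ (with combinatorial weights $1$, $7(j-1)$, $6(j-1)(j-2)$, $(j-1)(j-2)(j-3)$), bounds each one separately on the Fourier side via Tonelli, and then invokes the interpolation inequalities \eqref{interpolation} such as $|v|_3|v|_1+|v|_2^2\le 2|v|_4|v|_0$ to reassemble everything into $j^{\,4}$-type coefficients; you instead bound $\sum_k|k|^4|\widehat{v^j}(k)|$ in one stroke by $j^4|v|_4|v|_0^{j-1}$ using the convexity inequality $(|k_1|+\cdots+|k_j|)^4\le j^3(|k_1|^4+\cdots+|k_j|^4)$ together with the symmetry of the $j$-fold convolution, and then sum via $\sum_{j\ge 0}\frac{j^4}{j!}x^j=(x+7x^2+6x^3+x^4)e^x$. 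Your route is shorter, avoids the tedious integration-by-parts bookkeeping and the interpolation step entirely, and — somewhat remarkably — loses nothing: the two computations produce the identical smallness threshold. A further minor difference is the choice of regularization: the paper truncates the Taylor series of the exponential (problem \eqref{pbv1reg}) and runs Galerkin on that, whereas you use a pure Fourier--Galerkin projection on the full nonlinearity; both work, since projection is a contraction for both $|\cdot|_0$ and $|\cdot|_4$ and the truncated system is a finite-dimensional ODE with smooth right-hand side.
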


\begin{remark}
In order $v_0$ satisfies \eqref{small1}, it is enough to have
$$
|v_0|_0\leq 0.1.
$$
In particular, for $d=1$,
$$
v_0(x)=0.1\sin(1000 x)
$$
is a valid initial datum satisfying our hypothesis and having, at the same time, very large $\mH^s(\TT)$ norms.
\end{remark}

Furthermore, for the case where the initial data has certain Sobolev regularity, we have that
\begin{theorem}\label{theorem2}
Let $v_0\in \mA^0(\TT)\cap \mH^2(\TT)$ be such that \eqref{small1} is satisfied. 
Then, the solution constructed in Theorem \ref{theorem1} also satisfies
\begin{equation}\label{reg1}
v\in \mathscr{C}\left([0,T];{\mH}^2(\TT)\right)\cap \mL^2\left(0,T;{\mH}^4(\TT)\right).
\end{equation}
Furthermore, beyond \eqref{dec1A0}, the solution obeys
\begin{equation}\label{H2H4}
\|v(t)\|_{\mH^2}+c_1\int_0^T \|v(t)\|_{\mH^4}^2\,dt\leq c_2,
\end{equation}
\begin{equation}\label{expHr}
\|v(t)\|_{\mH^r}\leq c_3e^{-c_4 t} \quad \text{for all} \quad 0\leq r<2,
\end{equation}
where $c_i=c_i(v_0)$.
\end{theorem}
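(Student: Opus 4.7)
The plan is to run an \H^2 energy estimate on \eqref{1v} and absorb the nonlinear part into the bilaplacian dissipation, using that Theorem \ref{theorem1} already provides $|v(t)|_0\leq |v_0|_0$ uniformly in time. Taking Fourier coefficients of \eqref{1v}, expanding $e^{-v}$ in Taylor series, separating the linear contribution, and pairing with $|k|^{4}\overline{\widehat{v}(k)}$, one formally gets
\[
\frac{1}{2}\frac{d}{dt}\|v\|_{\H^2}^2 + \|v\|_{\H^4}^2 \;=\; \mathrm{Re}\sum_{j\geq 2}\frac{(-1)^j}{j!}\sum_{k\in\ZZ\setminus\{0\}}|k|^{8}\,\widehat{v}(k)\,\overline{\widehat{v^j}(k)},
\]
so the task reduces to controlling the right-hand side by a small multiple of $\|v\|_{\H^4}^2$.

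To bound each summand I would use Cauchy-Schwarz in $k$ to reduce to $\|v\|_{\H^4}\|v^j\|_{\H^4}$, together with the standard product inequality $\|fg\|_{\H^4}\leq C(|f|_0\|g\|_{\H^4}+\|f\|_{\H^4}|g|_0)$ and induction to obtain $\|v^j\|_{\H^4}\leq Cj\,|v|_0^{j-1}\|v\|_{\H^4}$. Summing in $j$,
\[
\sum_{j\geq 2}\frac{1}{j!}\|v\|_{\H^4}\|v^j\|_{\H^4}\;\leq\; C\bigl(e^{|v|_0}-1\bigr)\|v\|_{\H^4}^2.
\]
Since $|v(t)|_0\leq |v_0|_0$ by \eqref{dec1A0}, the smallness \eqref{small1} (possibly after a fixed rescaling of the implicit constant in the product estimate) forces $C(e^{|v_0|_0}-1)<1$, so the nonlinear contribution is absorbed and one is left with $\tfrac{1}{2}\tfrac{d}{dt}\|v\|_{\H^2}^2 + \eta\,\|v\|_{\H^4}^2 \leq 0$ for some $\eta>0$. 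Time integration yields \eqref{H2H4}.

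For \eqref{expHr} I would interpolate $\|v\|_{\H^r}\leq C\|v\|_{\mL^2}^{1-r/2}\|v\|_{\H^2}^{r/2}$, using $\|v(t)\|_{\mL^2}\leq (2\pi)^{d/2}\|v(t)\|_{\mL^\infty}\leq (2\pi)^{d/2}|v_0|_0\, e^{-\delta(|v_0|_0)t}$ from \eqref{dec1A0} together with the boundedness of $\|v\|_{\H^2}$ just obtained, obtaining the rate $c_4=(1-r/2)\delta(|v_0|_0)>0$ for any $r<2$. The continuity $v\in\mathscr{C}([0,T];\mH^2)$ comes from the energy identity combined with $\pat v\in \mL^2(0,T;\mH^{-2})$ (read off the equation using the $\mL^2(0,T;\mH^4)$ bound), passing to the limit along the approximation scheme already employed in Theorem \ref{theorem1}. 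The main obstacle is the careful bookkeeping of the product estimate: organising the multilinear contributions so that the combinatorics reorganise cleanly into $e^{|v|_0}-1$, and checking that the resulting implicit constant is compatible with the exact condition \eqref{small1} without having to strengthen it.
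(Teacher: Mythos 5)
Your overall frame (an $\mH^2$ energy estimate with $\mH^4$ dissipation, run on the approximating sequence, followed by interpolation for \eqref{expHr}) is the same as the paper's, and the interpolation step for the exponential decay is exactly what the paper does. But the way you close the energy estimate is genuinely different from the paper's, and as written it has a gap. You propose to \emph{absorb} the nonlinearity into the dissipation, which leads to a condition of the form $C(e^{|v_0|_0}-1)<1$ with $C$ the implicit constant in the product estimate $\|fg\|_{\H^4}\leq C(|f|_0\|g\|_{\H^4}+\|f\|_{\H^4}|g|_0)$; moreover, your induction $\|v^j\|_{\H^4}\leq Cj|v|_0^{j-1}\|v\|_{\H^4}$ compounds that constant geometrically in $j$, so the summed condition is really of the type $e^{C|v_0|_0}-1<c$ rather than $C(e^{|v_0|_0}-1)<1$. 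Nothing in the argument shows that \eqref{small1} implies this; you flag it yourself as ``the main obstacle'' and leave it unresolved. Since the theorem asserts the conclusion under exactly \eqref{small1} and no additional smallness, this is a real gap, not bookkeeping.

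The paper avoids the issue entirely: it never absorbs the nonlinear terms into $\|v\|_{\mH^4}^2$. After multiplying by $\Delta^2 v_N$ and integrating by parts, the coercive term carries the weight $e^{-v}>0$ in front of $|\Delta^2 v|^2$ (so it has a sign for free, with no smallness needed), and every remaining nonlinear term is bounded by $c(|v_0|_0)\,\|v\|_{\mH^2}^2\,|v|_4$. The only delicate point is the ``medium order'' term $\int v^{n-2}v,_j v,_{iij}\,\Delta^2 v\,dx$, which naively has too many derivatives on one factor; the paper integrates by parts three times to exhibit a perfect derivative $\partial_j(|\partial^3 v|^2)$ and reduce it to $\int\partial(v^n\partial v)|\partial^3 v|^2$, controlled via $\|\partial^3 v\|_{\mL^4}^2\leq c\|v\|_{\mH^2}|v|_2$. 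The estimate is then closed by Gronwall, using that $\int_0^T|v(t)|_4\,dt<\infty$ is \emph{already known} from the Wiener algebra estimate of Proposition \ref{propo1}; the condition \eqref{small1} enters only there. If you want to salvage your route, you would either have to track the product-estimate constants explicitly and accept a (possibly stronger) smallness condition, or switch to the paper's Gronwall-in-$|v|_4$ mechanism. One further small slip: for $v\in\mathscr{C}([0,T];\mH^2)$ you need $\pat v\in\mL^2(0,T;\mL^2(\TT))$ (which does follow from the $\mL^2(0,T;\mH^4)$ bound) paired with $v\in\mL^2(0,T;\mH^4)$; the pair $\pat v\in\mL^2(0,T;\mH^{-2})$ and $v\in\mL^2(0,T;\mH^4)$ that you quote only yields continuity in $\mH^1$ by interpolation.
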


\subsection{Main results of problem \eqref{2v}}
First we state our definition of weak solution for problem \eqref{2v}:
\begin{defin}\label{def2}
A function $v\in \mL^\infty(0,T; \mL^\infty(\TT))$ is a weak solution of \eqref{2v} with initial data $v_0$ if
$$
\int_{\TT}v_0(x)\varphi(0,x)\,dx-\int_0^T\int_\TT \pat \varphi(t,x)v(t,x)+\,\frac{\D^2\varphi(t,x)}{(1+v(t,x))^3}\,dx\,dt=0 
$$
for all $\varphi\in C^\infty([0,T)\times \TT)$.
\end{defin}

We prove

\begin{theorem}\label{teo2ex}
Let $v_0\in  \mA^0(\TT)$ be such that the value
\begin{equation}\label{small2}
\de(|v_0|_0)=6-\frac{3}{(1-|v_0|_0)^{4}}\left( 1+28\frac{|v_0|_0}{(1-|v_0|_0)}+120\frac{|v_0|_0^2}{(1-|v_0|_0)^2}+120\frac{|v_0|_0^3}{(1-|v_0|_0)^3}
\right)
\end{equation}
satisfies $0<\de(|v_0|_{0})$ and
$$
|v_0|_0<1. 
$$
Then there exists at least one global weak solution in the sense of Definition \ref{def2} to equation \eqref{2v} having the regularity
\begin{equation}\label{ex2}
v\in  \mL^\infty\left([0,T]\times \TT\right)\cap \mL^{\frac{4}{3}}\left(0,T;\mW^{3,\infty}(\TT)\right)\cap \mathcal{M}\left(0,T;\mW^{4,\infty}(\TT)\right)\cap \mL^2\left(0,T;{\mH}^2(\TT)\right).
\end{equation}
Furthermore, the solution satisfies
$$
v\in \mathscr{C}([0,T];\mL^2(\TT)),
$$
\begin{equation}\label{decA0}
\|v(t)\|_{\mL^\infty}\leq |v_0|_0e^{-\delta(|v_0|_0) t}\qq\forall t\in[0,T],
\end{equation}
for $\de(|v_0|_0)$ satisfying \eqref{small2}.
\end{theorem}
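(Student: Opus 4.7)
The plan is to adapt the proof of Theorem \ref{theorem1} to the rational nonlinearity $1/(1+v)^3$. I would first construct smooth approximations $v_\varepsilon$ (e.g.\ by a Galerkin truncation in Fourier space), exploiting that the assumption $|v_0|_0<1$ guarantees $1+v_0>0$ pointwise, so the nonlinearity is well defined; the mean-zero condition $\widehat{v}(t,0)=0$ is preserved by \eqref{2v} and will be used throughout.

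The heart of the argument is a differential inequality for $|v_\varepsilon|_0$. Since $|v|_0<1$ the binomial series
$$
\frac{1}{(1+v)^3}=\sum_{n\geq 0}(-1)^n\binom{n+2}{2}v^n
$$
converges in $\mathscr{A}^0$. Taking Fourier coefficients of \eqref{2v}, multiplying by $\overline{\widehat{v}(t,k)}/|\widehat{v}(t,k)|$, taking real parts and summing over $k\neq 0$ yields
$$
\frac{d}{dt}|v|_0+3|v|_4\leq \sum_{n\geq 2}\binom{n+2}{2}\sum_{k}|k|^4\,|\widehat{v^n}(t,k)|.
$$
I would then estimate each convolutional term by the standard bound $|k|\leq |k_1|+\cdots+|k_n|$, the multinomial expansion, and the interpolation \eqref{interpolation} $|v|_{\alpha_j}\leq |v|_0^{1-\alpha_j/4}|v|_4^{\alpha_j/4}$, obtaining
$$
\sum_k|k|^4\,|\widehat{v^n}(t,k)|\leq n^4\,|v|_0^{n-1}\,|v|_4,
$$
since $\sum_{\alpha_1+\cdots+\alpha_n=4}\tfrac{4!}{\alpha_1!\cdots\alpha_n!}=n^4$. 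Writing $n^4=n+7n(n-1)+6n(n-1)(n-2)+n(n-1)(n-2)(n-3)$ via Stirling numbers of the second kind and differentiating the identity $\sum_n\binom{n+2}{2}x^n=(1-x)^{-3}$ up to four times sums the series in closed form, producing exactly
$$
\frac{d}{dt}|v|_0\leq -\delta(|v|_0)\,|v|_4,
$$
with $\delta$ given by \eqref{small2}.

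A standard continuation argument then closes the estimate: since $\delta(|v_0|_0)>0$ and both $|v_\varepsilon(t)|_0$ and $\delta(\cdot)$ are continuous, the bound $|v(t)|_0\leq |v_0|_0<1$ propagates globally, so $\delta(|v(t)|_0)\geq \delta(|v_0|_0)>0$ for all $t$. The decay \eqref{decA0} follows from $|v|_4\geq|v|_0$ (because $|k|^4\geq 1$ on $\ZZ\setminus\{0\}$) and $\|v\|_{\mL^\infty}\leq|v|_0$. Integrating the differential inequality yields $|v|_4\in \mL^1(0,T)$ with an explicit bound, from which the interpolation estimates $|v|_3^{4/3}\leq |v|_0^{1/3}|v|_4$ and $\|v\|_{\mH^2}^2\leq |v|_0\,|v|_4$ produce the remaining regularity listed in \eqref{ex2}. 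Passing to the limit $\varepsilon\to 0$ in the weak formulation of Definition \ref{def2} is then routine via an Aubin--Lions type argument, using the uniform bound on $\pat v_\varepsilon$ in a negative-order space obtained from the $\mL^1(0,T;\mathscr{A}^4)$ bound.

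The main obstacle, compared to Theorem \ref{theorem1}, is the singularity of $(1+v)^{-3}$ at $v=-1$: everything hinges on propagating the strict bound $|v(t)|_0<1$ in time, which is exactly what the continuation argument provides and what also keeps $1/(1+v_\varepsilon)^3$ uniformly bounded away from blow-up during the limit passage. The resummation step is also more delicate than for the exponential nonlinearity in Theorem \ref{theorem1}, since it requires iterated differentiation of $(1-x)^{-3}$ and careful bookkeeping in order to match the coefficients $1,28,120,120$ appearing in \eqref{small2}.
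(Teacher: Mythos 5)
Your proposal is correct and follows essentially the same route as the paper: an a priori differential inequality for $|v|_0$ in the Wiener algebra obtained from the power series of $(1+v)^{-3}$, interpolation against $|v|_0^{n-1}|v|_4$, resummation of the series into the closed form \eqref{small2}, a continuation argument propagating $|v(t)|_0\le |v_0|_0<1$, and then compactness (an $\mL^2(0,T;\mH^{-2})$ bound on $\pat v$ plus Simon/Aubin--Lions) to pass to the limit in the weak formulation. The only difference is organizational: you bound $\sum_k|k|^4|\widehat{v^n}(k)|$ in one stroke by $n^4|v|_0^{n-1}|v|_4$ via the multinomial expansion of $(|k_1|+\cdots+|k_n|)^4$, whereas the paper expands $\D^2 v^n$ by Leibniz into seven terms and estimates each separately --- both give the coefficient $\binom{n+2}{2}n^4$ and hence exactly the same $\delta$.
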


\begin{remark}
In order $v_0$ satisfies \eqref{small2}, it is enough to have
$$
|v_0|_0\leq 0.023.
$$
\end{remark}

As for problem \eqref{1v}, for initial data having certain Sobolev regularity, we have that

\begin{theorem}\label{teo2reg}
Let $v_0\in  \mA^0(\TT)\cap  \mH^2(\TT)$ be a function satisfying the smallness condition in \eqref{small2} and
$$
|v_0|_0<1. 
$$
Then, the solution constructed in Theorem \ref{theorem2} also satisfies
\begin{equation}\label{reg1b}
v\in \mathscr{C}\left([0,T];{\mH}^2(\TT)\right)\cap \mL^2\left(0,T;{\mH}^4(\TT)\right).
\end{equation}
Furthermore, beyond \eqref{decA0}, the solution obeys
\begin{equation}\label{H2H4b}
\|v(t)\|_{\mH^2}+c_1\int_0^T \|v(t)\|_{\mH^4}^2\,dt\leq c_2,
\end{equation}
\begin{equation}\label{expHrb}
\|v(t)\|_{\mH^r}\leq c_3e^{-c_4 t} \quad \mbox{for all} \quad 0\leq r<2,
\end{equation}
where $c_i=c_i(v_0)$.
\end{theorem}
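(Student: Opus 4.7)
The plan is to promote the weak solution $v$ furnished by Theorem \ref{teo2ex} to a strong $\mH^2$ solution by performing an $\mH^2$ energy estimate on the same regularized approximation used to build the weak solution, and then passing to the limit. Two facts from Theorem \ref{teo2ex} drive the whole argument. First, the uniform bound $|v(t)|_0\leq|v_0|_0<1$ for every $t\geq 0$ keeps $1+v$ bounded away from zero and lets one expand
\[
F(v):=\frac{1}{(1+v)^3}=1-3v+G(v),\qquad G(v)=\sum_{n\geq 2}(-1)^n\binom{n+2}{2}v^n,
\]
with absolute uniform convergence in $(t,x)$. Second, the exponential decay \eqref{decA0} will drive the final interpolation step.

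The central step is the $\mH^2$ energy estimate. I would test the equation $\partial_t v=\Delta^2 F(v)$ against $\Delta^2 v$ in $\mL^2(\TT)$; the left-hand side produces $\tfrac12\tfrac{d}{dt}\|v\|_{\mH^2}^2$ (up to a harmless Parseval factor). Working in Fourier, since $\widehat{1}(k)=0$ for $k\neq 0$ and $\hat v(0)=0$ by the mean-zero constraint, the linear piece $-3v$ of $F$ yields the dissipation $-3\|v\|_{\mH^4}^2$, while the nonlinear remainder reads $\sum_{k\neq 0}|k|^8\,\mathrm{Re}\bigl(\widehat{G(v)}(k)\,\overline{\hat v(k)}\bigr)$. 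I would bound this remainder by keeping one factor of $\hat v$ as an $\mH^4$-type weight $|k|^4|\hat v(k)|$ and estimating every other factor in the convolutions $\widehat{v^n}=\hat v^{*n}$ in the Wiener norm $|v|_0$, mirroring the Fourier computation that underlies Theorem \ref{teo2ex} but with one additional weight $|k|^4$ transferred to the Sobolev side. Summing the resulting geometric series in $|v|_0\leq|v_0|_0<1$ and absorbing the result into the linear dissipation leads, exactly under the hypothesis \eqref{small2}, to
\[
\frac{d}{dt}\|v\|_{\mH^2}^2+2\mu(|v_0|_0)\|v\|_{\mH^4}^2\leq 0,\qquad \mu(|v_0|_0)>0,
\]
which after time integration produces both \eqref{H2H4b} and the regularity \eqref{reg1b}. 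Continuity $v\in\mathscr{C}([0,T];\mH^2)$ then follows from Lions--Magenes interpolation, since the $\mL^2(0,T;\mH^4)$ bound on $v$ together with the smoothness of $F$ and a Moser composition estimate gives $\partial_t v\in\mL^2(0,T;\mL^2)$.

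For the exponential decay \eqref{expHrb} I would interpolate through
\[
\|v\|_{\mH^r}\leq C\,\|v\|_{\mL^2}^{1-r/2}\,\|v\|_{\mH^2}^{r/2},\qquad 0\leq r<2,
\]
combined with the uniform $\mH^2$-bound just obtained and $\|v\|_{\mL^2}\leq C|v|_0\leq C|v_0|_0 e^{-\delta(|v_0|_0)t}$ coming from \eqref{decA0}. This yields \eqref{expHrb} with $c_4=(1-r/2)\,\delta(|v_0|_0)$.

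The main obstacle is the careful bookkeeping in the nonlinear contribution $\int_{\TT}\Delta^2 v\cdot\Delta^2 G(v)\,dx$: because $G$ is an infinite power series, every polynomial piece $v^n$ generates a weighted $n$-fold convolution estimate, and all of them must sum up to a constant strictly less than the linear dissipation coefficient in order to be absorbed. Pushing this through with explicit constants, in the same spirit as the Wiener-algebra computations supporting Theorem \ref{teo2ex} but carrying one additional order-four weight, is exactly what pins down the smallness threshold \eqref{small2} and the numerical bound $|v_0|_0\leq 0.023$.
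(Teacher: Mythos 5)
Your overall architecture (an $\mH^2$ energy estimate on the regularized problem, passage to the limit, then interpolation against the Wiener decay for \eqref{expHrb}) matches the paper's, and your treatment of the continuity statement and of \eqref{expHrb} is fine. But the central step — how the $\mH^2$ estimate is closed — is where you diverge from the paper, and as written it contains a genuine gap. You claim that the nonlinear contribution $\sum_{k}|k|^8\,\mathrm{Re}\bigl(\widehat{G(v)}(k)\overline{\hat v(k)}\bigr)$ can be absorbed into the linear dissipation $-3\|v\|_{\mH^4}^2$ ``exactly under the hypothesis \eqref{small2}'', yielding $\frac{d}{dt}\|v\|_{\mH^2}^2+2\mu\|v\|_{\mH^4}^2\le 0$. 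The condition \eqref{small2} was calibrated for the $\mA^0$ estimate, where one distributes a single weight $|k|^4$ over an $n$-fold convolution measured entirely in $\ell^1$, and the precise coefficients $1+28w/(1-w)+\dots$ come from that specific bookkeeping. In the $\mH^2$ estimate the bookkeeping is different: after Cauchy--Schwarz one factor must be kept in $\ell^2$ with weight $|k|^4$, the Leibniz splitting of $|k|^4$ over $\widehat{v^{\,*n}}$ produces cross terms such as $\|v\|_{\mH^2}|v|_2|v|_0^{n-2}$ that mix Sobolev and Wiener norms of intermediate order, and the resulting combinatorial constants are not those of \eqref{small2}. You give no computation showing the summed series stays below $3$ at the threshold $|v_0|_0\le 0.023$, and nothing in the statement of the theorem entitles you to assume it does. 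Note also that your claimed differential inequality would give exponential decay of $\|v(t)\|_{\mH^2}$ itself, which is strictly stronger than \eqref{H2H4b}--\eqref{expHrb}; that the paper does not assert this should be a warning sign.

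The paper closes the estimate by a different and more robust mechanism that you should adopt or at least reproduce: the dissipation coefficient $\sum_n\binom{n+2}{n-1}(-v)^{n-1}\to(1+v)^{-4}$ is only required to be \emph{positive}, which follows from $|v|_0<1$ alone (no smallness beyond that is used at this stage); the nonlinear remainders are \emph{not} absorbed but are bounded by $C(|v_0|_0)\,\|v\|_{\mH^2}^2\,|v|_4$ — this requires integrating by parts several times in the third-order (``medium'') terms to expose a perfect-derivative structure, plus the inequalities $\|\nabla v\|_{\mL^4}^2\le c\|v\|_{\mH^2}|v|_0$ and $\|\nabla\Delta v\|_{\mL^4}^2\le c\|\Delta v\|_{\mL^2}|v|_4$ — and the estimate is closed by Gronwall using the fact, already established in the Wiener-algebra step, that $\int_0^T|v(s)|_4\,ds\le|v_0|_0/\delta(|v_0|_0)<\infty$. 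The $\mL^2(0,T;\mH^4)$ bound is then read off a posteriori from the signed dissipation term. Unless you carry out the explicit summation showing your series is dominated by the dissipation at the stated threshold, your argument does not establish the theorem under the hypothesis \eqref{small2}.
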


\subsection{Discussion}
In this paper we prove the global existence of weak solution for \eqref{1v} and \eqref{2v} for initial data satisfying a size restriction in the Wiener algebra. One of the main advantages is that the size restriction is explicit and concerns a lower order norm (it does not impose any requirement on the size of derivatives of $v$). Furthermore, when the domain is $\mathbb{R}^d$, the norm in the Wiener algebra is invariant by the scaling \eqref{scaling}. This makes the Wiener algebra a critical space for problems \eqref{1v} and \eqref{2v}.

In terms of the original problem \eqref{1u}, our results state that if the Laplacian of the initial data  satisfies certain explicit size restriction in the Wiener algebra, then the solution $u$ exists and its Laplacian remains globally bounded. In particular, the Laplacian of these solutions $u$ can not be a singular measure (compare with \cite{liu2016existence, gao2017gradient} and note that the $u$ reconstructed from our solutions $v$ satisfies $u(t)\in W^{2,\infty}(\TT)$). Indeed, in terms of the original variables, our existence results could be stated as follows

\begin{theorem}\label{theorem1b}
Let $u_0\in \mA^2(\TT)$ be such that the value
\begin{equation}\label{small1b}
\de(|u_0|_2)=2-e^{|u_0|_2}-7|u_0|_2e^{|u_0|_2}-6|u_0|_2^2e^{|u_0|_2}-|u_0|_2^3e^{|u_0|_2}
\end{equation}
satisfies $0<\de(|u_0|_{2})$. Then, there exist at least one global weak solution to equation \eqref{1u}
$$
u\in \mathscr{C}([0,T];\mH^2(\TT))\cap \mL^2\left(0,T;{\mH}^4(\TT)\right)\qq\forall 0<T<\infty.
$$
Furthermore, the solution satisfies
\begin{equation}\label{dec1A0b}
\|\Delta u(t)\|_{\mL^\infty}\leq |u_0|_2e^{-\delta(|u_0|_2) t}\qq\forall t,
\end{equation}
for $\delta(|v_0|_0)$ defined in \eqref{small1b} and a universal constant $c$.
\end{theorem}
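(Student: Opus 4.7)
The plan is to deduce the statement from Theorem~\ref{theorem1} through the substitution $v=\Delta u$. First, observe that since $u_0$ has zero mean, the identity $\widehat{\Delta u_0}(k)=-|k|^2\widehat{u_0}(k)$ yields
\[
|\Delta u_0|_0=\sum_{k\in\ZZ\setminus\{0\}}|k|^2|\widehat{u_0}(k)|=|u_0|_2,
\]
so the smallness hypothesis~\eqref{small1b} on $u_0\in\mA^2(\TT)$ is exactly the smallness hypothesis~\eqref{small1} applied to $v_0:=\Delta u_0$, which in addition has zero mean.

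Next, applying Theorem~\ref{theorem1} to this $v_0$ produces a global weak solution $v$ of~\eqref{1v} enjoying the regularity~\eqref{ex1} together with $v\in\mathscr{C}([0,T];\mL^2(\TT))$ and the exponential decay $\|v(t)\|_{\mL^\infty}\leq|v_0|_0\,e^{-\delta(|v_0|_0)t}$. Since $\Delta^2 e^{-v}$ has zero mean, the PDE~\eqref{1v} preserves the zero-mean condition, and for every $t$ I can invert the Laplacian on the torus to define $u(t)$ as the unique zero-mean solution of $\Delta u(t)=v(t)$. Standard elliptic regularity on $\TT$ then transfers the regularity of $v$ two derivatives higher, yielding in particular $u\in\mathscr{C}([0,T];\mH^2(\TT))\cap\mL^2(0,T;\mH^4(\TT))$ for every $T>0$.

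Finally, one must verify that the reconstructed $u$ is a weak solution of~\eqref{1u}. Differentiating $\Delta u(t)=v(t)$ in time gives $\Delta\pat u=\pat v=\Delta^2 e^{-v}=\Delta\bigl(\Delta e^{-\Delta u}\bigr)$, and both $\pat u$ and $\Delta e^{-\Delta u}$ have zero mean (the former because $u(t)$ has zero mean for every $t$, the latter trivially on $\TT$); since the Laplacian has trivial kernel on zero-mean functions, $\pat u=\Delta e^{-\Delta u}$. The decay~\eqref{dec1A0b} is then immediate, as $\|\Delta u(t)\|_{\mL^\infty}=\|v(t)\|_{\mL^\infty}\leq|v_0|_0\,e^{-\delta(|v_0|_0)t}=|u_0|_2\,e^{-\delta(|u_0|_2)t}$.

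The main technical point I expect is translating the weak formulation of~\eqref{1v} into that of~\eqref{1u} at the level of the test functions; this can be handled by testing~\eqref{1v} against $\Delta\psi$ with $\psi\in\mW^{1,1}(0,T;\mL^1(\TT))\cap\mL^1(0,T;\mW^{2,1}(\TT))$ and using the density of such functions among admissible test functions together with the fact that, modulo constants which integrate to zero against $\pat v$, the range of $\Delta$ covers the zero-mean test space. Beyond this bookkeeping, the argument is a direct corollary of Theorem~\ref{theorem1}.
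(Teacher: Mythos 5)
Your proposal is correct and is exactly the route the paper intends: Theorem~\ref{theorem1b} is stated in the discussion as a direct corollary of Theorem~\ref{theorem1} under the change of variables $v=\Delta u$, using $|\Delta u_0|_0=|u_0|_2$, the preservation of the zero mean, and inversion of the Laplacian on mean-zero functions to gain two derivatives. The only (minor) bookkeeping caveat is that for $\Delta\psi$ to be admissible in Definition~\ref{definition1} you need $\psi\in\mW^{1,1}(0,T;\mW^{2,1}(\TT))\cap\mL^1(0,T;\mW^{6,1}(\TT))$, but this does not affect the argument.
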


and

\begin{theorem}\label{teo2exb}
Let $0\leq u_0\in  \mA^0(\TT)$ be such that the value $\de(|u_0^{-1}-1|_0)$ (defined as in \eqref{small2}) satisfies $0<\de(|u_0^{-1}-1|_{0})$ and
$$
|u_0^{-1}-1|_0<1,
$$
(in particular $0.5<u_0$). Then there exists at least one global weak solution 
$$
u^{-1}\in \mathscr{C}([0,T];\mL^2(\TT))\cap \mL^2\left(0,T;{\mH}^4(\TT)\right)\qq\forall 0<T<\infty,
$$
(in the sense that $v=u^{-1}-1$ satisfies Definition \ref{def2}). Furthermore
\begin{equation}\label{decA0b}
\|u^{-1}(t)-1\|_{\mL^\infty}\leq |u^{-1}_0-1|_0e^{-\delta(|u_0^{-1}-1|_0) t}\qq\forall t.
\end{equation}
In particular
$$
\limsup_{t\rightarrow\infty} \|u(t)\|_{L^\infty}=1.
$$
\end{theorem}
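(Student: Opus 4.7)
The plan is to reduce Theorem \ref{teo2exb} to the already-established Theorem \ref{teo2ex} (and, for the $\mH^4$ component of the regularity, Theorem \ref{teo2reg}) via the pointwise nonlinear change of variable $v:=u^{-1}-1$, under which problem \eqref{2u} is equivalent to problem \eqref{2v} as observed in the introduction. Given data $u_0$ for \eqref{2u}, I would set $v_0:=u_0^{-1}-1$ and construct $u$ by inverting the solution $v$ produced by those earlier theorems.

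The first step is to check that the hypotheses transfer: by definition $|v_0|_0=|u_0^{-1}-1|_0$, so the smallness condition $0<\delta(|u_0^{-1}-1|_0)$ is exactly what Theorem \ref{teo2ex} demands of $v_0$, while $|u_0^{-1}-1|_0<1$ forces $1+v_0=u_0^{-1}>0$ pointwise, with in fact $u_0>1/2$ since $\|v_0\|_{\mL^\infty}\leq |v_0|_0<1$ (this justifies the parenthetical $0.5<u_0$). Applying Theorem \ref{teo2ex} produces a global weak solution $v$ of \eqref{2v} with regularity \eqref{ex2}, continuity $v\in\mathscr{C}([0,T];\mL^2)$, and decay \eqref{decA0}; Theorem \ref{teo2reg} upgrades the spatial regularity to $\mH^4$ at the cost of the additional $\mH^2$ hypothesis required for that estimate.

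Next, I would define $u:=1/(1+v)$. The uniform bound $\|v(t)\|_{\mL^\infty}\leq |v(t)|_0\leq |v_0|_0 e^{-\delta(|v_0|_0) t}<1$ keeps $1+v$ bounded away from zero in all of $(t,x)$, so $u$ is a well-defined positive $\mL^\infty$ function inheriting regularity from $v$: in particular $u^{-1}=1+v\in\mathscr{C}([0,T];\mL^2)\cap \mL^2(0,T;\mH^4)$, which is the stated regularity for $u^{-1}$. Since the algebraic equivalence $\pat u=-u^2\Delta^2(u^3)\Longleftrightarrow \pat v=\Delta^2((1+v)^{-3})$ is exactly what links \eqref{2u} and \eqref{2v}, and since $v$ satisfies \eqref{2v} in the sense of Definition \ref{def2}, the conclusion of Theorem \ref{teo2exb} that ``$v=u^{-1}-1$ satisfies Definition \ref{def2}'' is automatic. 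The $\mL^\infty$ decay \eqref{decA0b} is then merely \eqref{decA0} read through $u^{-1}-1=v$.

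For the terminal identity $\limsup_{t\to\infty}\|u(t)\|_{\mL^\infty}=1$: since $\|v(t)\|_{\mL^\infty}\to 0$ and $u=1/(1+v)$, $u(t)\to 1$ uniformly, yielding $\limsup\leq 1$; conversely, the mean-zero conservation $\int_\TT v(t,x)\,dx=0$ built into \eqref{2v} forces $v(t)$ to take strictly negative values on a set of positive measure whenever $v(t)\not\equiv 0$, so $\|u(t)\|_{\mL^\infty}>1$ for all finite $t$, providing the matching lower bound on the $\limsup$. I do not expect a substantial technical obstacle here: the theorem is essentially a dictionary translation of Theorems \ref{teo2ex}--\ref{teo2reg} through a smooth pointwise change of variables, and the only delicate point is to keep $1+v$ uniformly positive, which the smallness of $|v_0|_0$ and the exponential decay of $|v(t)|_0$ provide for free.
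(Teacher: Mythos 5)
Your proposal is correct and follows exactly the paper's (implicit) route: Theorem~\ref{teo2exb} is stated in the discussion section precisely as a dictionary translation of Theorems~\ref{teo2ex} and~\ref{teo2reg} through the change of variables $v=u^{-1}-1$, with no separate proof supplied, and your verification of the hypothesis transfer, the uniform positivity of $1+v$, and the $\limsup$ identity fills in that translation correctly. Your remark that the $\mL^2(0,T;\mH^4(\TT))$ part of the conclusion really needs the additional $\mH^2$ hypothesis of Theorem~\ref{teo2reg} --- which the statement of Theorem~\ref{teo2exb} omits --- points to a small imprecision in the paper's own statement rather than a gap in your argument.
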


Concerning \eqref{2u}, our results imply that if the appropriate transformation of the initial data satisfies certain hypotheses, then the solution $u$ exists and remains bounded and positive (actually, $0.5<u(t)$). Furthermore, we can also quantify the convergence rate towards the steady state (compare with \cite{gao2017vicinal,gao2017weak}).

After the completion of this work, two new papers studying \eqref{1u} appeared, one by Jian-Guo Liu \& Robert Strain \cite{LS} and another by David Ambrose \cite{Ambrose}. 

Liu \& Strain \cite{LS} prove global existence, uniqueness, optimal large time decay rates, and uniform gain of analyticity for the exponential PDE \eqref{1u} (when the domain is the real space $\mathbb{R}^d$). We emphasize that the size restriction on the initial data in \cite{LS} ($|v_0|_0<0.1$) is equivalent to the size restriction in Remark 1 and our techniques are somehow close to the techniques in \cite{LS}. Obviously, the fact that the domain is the $d-$dimensional space instead of the $d-$dimensional torus as in our paper implies that the decay estimates in Liu \& Strain \cite{LS} are algebraic instead of exponential (as in our Theorem \ref{theorem1}). Interestingly, these authors also prove the instant gain of spatial analyticity and the uniqueness of solution.

Ambrose \cite{Ambrose} showed that in the case of the $d-$dimensional torus, the solutions become analytic at any positive time, with the radius of analyticity growing linearly for all time (in Liu \& Strain, due to the fact that they consider the free space, the growth was like $t^{0.25}$). Furthermore, using different estimates, Ambrose proved that the size restriction $|v_0|_0<0.25$ is enough to guarantee global existence, thus, improving the conditions in Theorem \ref{theorem1} and in \cite{LS}.

\section{The problem \eqref{1v}}
We consider the following \emph{weakly nonlinear} regularized analog of \eqref{1v}
\begin{equation}\label{pbv1reg}
\begin{cases}
\begin{array}{lll}
\ds \pat v_N=\D^2 \sum_{j=0}^N\frac{(-v_N)^j}{j!} & \text{ in }  [0,T]\times \TT,&\\
\ds v_N(0,x)=v_0(x)=\D u_0(x) &\text{ in }\TT. &
\end{array}
\end{cases}
\end{equation}
For this problem, we can construct a solution $v_N$ following a standard Galerkin approach. 

\subsection{Estimates for the regularized problem in the Wiener algebra $\mathscr{A}^0$}\label{pb1}

\begin{proposition}[Estimates in the Wiener algebra]\label{propo1}
Let $v_0\in \mA^0(\TT)$ be a function satisfying the condition in \eqref{small1}. Then, every approximating sequence of solutions $\{v_N\}_N$ of \eqref{pbv1reg} is uniformly bounded in
\[
\{v_N\}_N\in \mW^{1,1}(0,T;\mA^0(\TT))\cap \mL^1(0,T;{\mA}^4(\TT)).
\]
Furthermore, we have that
\begin{equation*} 
|v_N(t)|_0\leq |v_0|_0e^{-\delta(|v_0|_0) t}\qq\forall t\in[0,T],
\end{equation*}
with $\delta(|v_0|_0)$ satisfying \eqref{small1}. 
\end{proposition}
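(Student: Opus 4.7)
The plan is to work entirely on the Fourier side and split the right-hand side of \eqref{pbv1reg} into the linear dissipative contribution $\D^2(-v_N)$ (the $j=1$ term of the Taylor polynomial) and a remainder; provided $|v_0|_0$ is so small that the gap in \eqref{small1} is positive, the dissipation should absorb the nonlinear part with precisely that gap as the rate. Concretely, since $v_0=\D u_0$ is mean-zero and $\D^2$ preserves the mean, $\widehat{v_N}(t,0)\equiv 0$, so for $k\in\ZZ\setminus\{0\}$ the equation reads
\[
\pat \widehat{v_N}(t,k) = -|k|^4\,\widehat{v_N}(t,k) + |k|^4 \sum_{j=2}^{N}\frac{(-1)^j}{j!}\,\widehat{v_N^{\,j}}(t,k).
\]
Regularising the modulus by $\sqrt{|\widehat{v_N}|^2+\eps^2}$ and letting $\eps\to 0$ in the standard way yields the pointwise-in-$k$ estimate
\[
\frac{d}{dt}|\widehat{v_N}(t,k)| + |k|^4|\widehat{v_N}(t,k)| \leq |k|^4 \sum_{j=2}^{N} \frac{1}{j!}\,\bigl|\widehat{v_N^{\,j}}(t,k)\bigr|.
\]

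To sum over $k$, I would expand $\widehat{v_N^{\,j}}(k)$ as a $j$-fold convolution of $\widehat{v_N}$ and redistribute the factor $|k|^4$ across the arguments via the convex inequality
\[
|k_1+\cdots+k_j|^4 \leq j^3\bigl(|k_1|^4+\cdots+|k_j|^4\bigr),
\]
which, after symmetrisation over the $j$ factors, gives $\sum_{k\neq 0}|k|^4\bigl|\widehat{v_N^{\,j}}(k)\bigr| \leq j^4\, |v_N|_4\, |v_N|_0^{j-1}$. The resulting scalar series $\sum_{j\geq 2} j^4 x^{j-1}/j!$ is classical: reindexing $m=j-1$ and using $\sum_{m\geq 0} m^p x^m/m!$ for $p=0,1,2,3$ should collapse it to exactly $e^x(1+7x+6x^2+x^3)-1$. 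Combining this with the $-|v_N|_4$ coming from the linear dissipation produces the key differential inequality
\[
\frac{d}{dt}|v_N(t)|_0 \;\leq\; -\de(|v_N(t)|_0)\,|v_N(t)|_4,
\]
with $\de$ precisely the function in \eqref{small1}.

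From here the rest is soft. Because $\de$ is continuous and decreasing for small arguments and $\de(|v_0|_0)>0$, a short continuity/bootstrap argument keeps $|v_N(t)|_0$ non-increasing and therefore $\de(|v_N(t)|_0)\geq \de(|v_0|_0)$ on $[0,T]$. Since $v_N$ is mean-zero, its spectrum lies in $|k|\geq 1$ and consequently $|v_N|_0\leq |v_N|_4$, so Grönwall applied to the inequality above delivers the stated exponential decay $|v_N(t)|_0\leq |v_0|_0\,e^{-\de(|v_0|_0)t}$. Integrating the same inequality in time gives $\de(|v_0|_0)\int_0^T|v_N(t)|_4\,dt\leq |v_0|_0$, which is the uniform $\mL^1(0,T;\mA^4)$ control. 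Finally, feeding the same $j^4$ bookkeeping directly into $\pat v_N=\D^2\sum_{j=1}^N (-v_N)^j/j!$ bounds $|\pat v_N(t)|_0$ by $e^{|v_N|_0}(1+7|v_N|_0+6|v_N|_0^2+|v_N|_0^3)\,|v_N(t)|_4$, i.e.\ by $C(|v_0|_0)\,|v_N(t)|_4$, and combining with the previous line gives the $\mW^{1,1}(0,T;\mA^0)$ bound.

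The main technical obstacle is the combinatorial step: anything weaker than the $j^4$ redistribution of $|k|^4$ would destroy the matching with the $j!$ denominator, the generating series would no longer collapse to the explicit polynomial $1+7x+6x^2+x^3$, and the smallness threshold would either become implicit or be strictly more restrictive than \eqref{small1}. The whole quantitative content of the proposition is encoded in this one convex inequality and in recognising the resulting moment series of the exponential.
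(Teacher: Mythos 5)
Your proposal is correct and lands on exactly the same differential inequality $\frac{d}{dt}|v_N|_0\leq -\de(|v_N|_0)|v_N|_4$ with the identical polynomial $e^x(1+7x+6x^2+x^3)-2$, but the key combinatorial step is handled by a genuinely different (and leaner) device. The paper expands $\D^2(v^j)$ by the Leibniz rule in physical space into seven families of terms ($v^{j-1}v,_{ii\ell\ell}$, $v^{j-2}v,_{ii\ell}v,_\ell$, etc.), estimates each as a multiple convolution in $\mA^0$, and then uses the interpolation inequality \eqref{interpolation} to convert the resulting products $|v|_3|v|_1$, $|v|_2^2$, $|v|_2|v|_1^2$, $|v|_1^4$ back into $|v|_4|v|_0$, $|v|_4|v|_0^2$, $|v|_4|v|_0^3$; the coefficient it collects for the $j$-th term is $\frac{1}{(j-1)!}\bigl(1+(j-1)[7+6(j-2)+(j-2)(j-3)]\bigr)$. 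You instead keep $\D^2(v^j)$ as $|k|^4\widehat{v^j}(k)$ and redistribute the full weight $|k|^4$ over the $j$ convolution factors via $|k_1+\cdots+k_j|^4\leq j^3(|k_1|^4+\cdots+|k_j|^4)$, obtaining the coefficient $j^4/j!=j^3/(j-1)!$ directly. Since $1+(j-1)[7+6(j-2)+(j-2)(j-3)]=1+(j-1)(j^2+j+1)=j^3$, the two bookkeepings are numerically identical term by term, so the threshold \eqref{small1} is reproduced exactly. What your route buys is the elimination of both the Leibniz expansion and the interpolation step, at the price of not producing the explicit mode-by-mode form of the nonlinearities (which the paper reuses later, e.g.\ in the $\mH^2$ estimates of Proposition 3.6). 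The remaining ingredients --- the regularised modulus for $\frac{d}{dt}|\widehat{v}(k)|$, the continuity/bootstrap ($t^*$) argument, the Poincar\'e inequality $|v_N|_0\leq|v_N|_4$ from mean-zero-ness, and the derivation of the $\mW^{1,1}(0,T;\mA^0)$ bound from $|\pat v_N|_0\leq C|v_N|_4$ --- coincide with the paper's.
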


\begin{proof}To simplify the notation, we write $v$ when referring to $v_N$. We compute
\begin{align}
\pat v=&\D^2 \left( \sum_{j=0}^N(-1)^j\frac{v^j}{j!}  \right) \nonumber\\
=&\sum_{j= 1}^N\frac{(-1)^j}{(j-1)!}
\biggl[
v^{j-1}v,_{ii\ell\ell}
+2(j-1)v^{j-2}v,_{ii\ell}v,_\ell+(j-1)|v,_{ii}|^2v^{j-2}\nonumber\\
&
+2(j-1)(j-2)v^{j-3}v,_{ii}v,_\ell v,_\ell +4(j-1)(j-2)v^{j-3}v,_{i\ell}v,_{\ell}v,_i\nonumber\\
&+2(j-1)v^{j-2}(v,_{i\ell} v,_\ell),_i +(j-1)(j-2)(j-3)v^{j-4}v,_\ell v,_\ell v,_i v,_i
\biggr]\nonumber\\
=&\sum_{j=1}^N\frac{(-1)^j}{(j-1)!}
\biggl[
v^{j-1}v,_{ii\ell\ell}
+4(j-1)v^{j-2}v,_{ii\ell}v,_\ell+(j-1)|v,_{ii}|^2v^{j-2}\nonumber\\
&
+2(j-1)(j-2)v^{j-3}v,_{ii}v,_\ell v,_\ell +4(j-1)(j-2)v^{j-3}v,_{i\ell}v,_{\ell}v,_i \nonumber\\
&+2(j-1)v^{j-2}v,_{i\ell} v,_{i\ell}+(j-1)(j-2)(j-3)v^{j-4}v,_\ell v,_\ell v,_i v,_i
\biggr].\label{eq:v}
\end{align}
 
In Fourier variables  and omitting the time variable, the above equality reads
$$
\pat \fv(k)=-|k|^4\fv(k)+\sum_{r=1}^7\sum_{j= 2}^N\frac{(-1)^j}{(j-1)!}\,\fn^ j_r(k)
$$
where the nonlinearities are
\begin{align}
\fn_1^ j(k)=&
\sum_{\al^1\in\ZZ}\ldots\sum_{\al^{j-1}\in\ZZ}\fv(\al^{j-1})\prod_{n=1}^{j-2}\fv(\al^n-\al^{n+1}) |k-\al^{1}|^4\fv(k-\al^{1}),\nonumber\\ 
\fn_2^ j(k)=&4(j-1)\sum_{\al^1\in\ZZ}\ldots\sum_{\al^{j-1}\in\ZZ}\biggl[ \fv(\al^{j-1})\prod_{n=2}^{j-2}\fv(\al^ n-\al^ {n+1})\fv(\al^1-\al^2)(\al^1_{\ell}-\al^2_{\ell})|\al^1-\al^2|^2\nonumber \\
&\qq\qq\qq\qq\qq\qq\qq\qq\times(k_\ell-\al_\ell^{1})\fv(k-\al^{1})\biggr],\nonumber
\\ 
\fn_3^ j(k)=&(j-1)\sum_{\al^1\in\ZZ}\ldots\sum_{\al^{j-1}\in\ZZ}\fv(\al^{j-1})\prod_{n=2}^{j-2}\fv(\al^ n-\al^ {n+1})\fv(\al^1-\al^2)|\al^1-\al^2|^2 |k-\al^{1}|^2\fv(k-\al^{1}),\nonumber
\\ 
\fn_4^ j(k)=&2(j-1)(j-2)\sum_{\al^1\in\ZZ}\ldots\sum_{\al^{j-1}\in\ZZ}\biggl[\fv(\al^{j-1})\prod_{n=3}^{j-2}
\fv(\al^ n-\al^ {n+1})(\al^ 2_\ell-\al_\ell^{3})\fv(\al^2-\al^{3}) \nonumber\\
&\qq\qq\qq\qq\qq\qq\qq\qq\qq\q \times\fv(\al^1-\al^2)|\al^1-\al^2|^2(k_\ell-\al_\ell^{1})\fv(k-\al^{1})
\biggr]\,.\nonumber
\end{align}
Similarly, 
\begin{align}
\fn_5^ j(k)=&4(j-1)(j-2)\sum_{\al^1\in\ZZ}\ldots\sum_{\al^{j-1}\in\ZZ}\biggl[\fv(\al^{j-1})\prod_{n=3}^{j-2}
\fv(\al^ n-\al^ {n+1})
(\al_i^2-\al_i^3)(\al_\ell^2-\al_\ell^3)
\fv(\al^2-\al^3) 
\nonumber\\
&\qq\qq\qq\qq\qq\qq\qq\qq \qq\q\times 
(\al^1_\ell-\al^2_\ell)\fv(\al^1-\al^2) 
(k_i-\al^1_i)\fv(k-\al^1)\biggr],\nonumber
\\ 
\fn_6^ j(k)=&2(j-1)\sum_{\al^1\in\ZZ}\ldots\sum_{\al^{j-1}\in\ZZ}\biggl[\fv(\al^{j-1})\prod_{n=2}^{j-2}
\fv(\al^ n-\al^ {n+1})
(\al_i^1-\al_i^2)(\al_\ell^1-\al_\ell^2)
\fv(\al^1-\al^2) 
\nonumber\\
&\qq\qq\qq\qq\qq\qq\qq\qq \times 
(k_i-\al_i^1)(k_\ell-\al_\ell^1)
\fv(k-\al^1)\biggr],\nonumber
\\ 
\fn_7^ j(k)=&(j-1)(j-2)(j-3)\sum_{\al^1\in\ZZ}\ldots\sum_{\al^{j-1}\in\ZZ}\biggl[\fv(\al^{j-1})\prod_{n=4}^{j-2}
\fv(\al^ n-\al^ {n+1})
\nonumber\\
&\times(\al_i^3-\al_i^4)\fv(\al^3-\al^4) 
(\al_i^2-\al_i^3)\fv(\al^2-\al^3) 
(\al_\ell^1-\al_\ell^2)\fv(\al^1-\al^2) 
(k_\ell-\al_\ell^1)
\fv(k-\al^1)\biggr]. \nonumber
\end{align}
We want to obtain an estimate for $|v|_0$. Reintroducing the time variable and since
\begin{equation}\label{dert}
\pat |v(t,k)|=\frac{\Re(\overline{v}(t,k)\pat v(t,k))}{|v(t,k)|},
\end{equation}
we have that
$$
\frac{d}{dt}|v(t)|_0\leq -|v(t)|_4+\sum_{r=1}^7\sum_{j= 2}^\infty\frac{|\mathcal{N}^ j_r|_0(t)}{(j-1)!}.
$$
Then, using Tonelli's Theorem, we estimate $\mathcal{N}_1^j$ as
\begin{align}
|\mathcal{N}_1^ j(t)|_0\leq& \sum_{k\in\ZZ}\sum_{\al^1\in\ZZ}\ldots\sum_{\al^{j-1}\in\ZZ}\bigg{|}\fv(t,\al^{j-1})\prod_{n=1}^{j-2}\fv(t,\al^n-\al^{n+1}) |k-\al^{1}|^4\fv(t,k-\al^{1})\bigg{|}\nonumber\\
\leq &|v(t)|_4|v(t)|_0^{j-1}.\nonumber
\end{align}
Using the trivial inequality $|\beta_\ell|\leq \sqrt{\beta_1^2+\dots +\beta_\ell^2+\dots+\beta_d^2}=|\beta|,$ we have that
\begin{align}
|\mathcal{N}_2^ j(t)|_0\leq& 4(j-1)|v(t)|_0^{j-2}|v(t)|_3|v(t)|_1,\nonumber\\
|\mathcal{N}_3^ j(t)|_0\leq& (j-1)|v(t)|_0^{j-2}|v(t)|_2^2,\nonumber\\
|\mathcal{N}_4^ j(t)|_0\leq& 2(j-1)(j-2)|v(t)|_0^{j-3}|v(t)|_2|v(t)|_1^2,\nonumber\\
|\mathcal{N}_5^ j(t)|_0\leq& 4(j-1)(j-2)|v(t)|_0^{j-3}|v(t)|_2|v(t)|_1^2,\nonumber\\
|\mathcal{N}_6^ j(t)|_0\leq& 2(j-1)|v(t)|_0^{j-2}|v(t)|_2^2,\nonumber\\
|\mathcal{N}_7^j(t)|_0\leq & (j-1)(j-2)(j-3)|v(t)|_0^{j-4}|v(t)|_1^4.\nonumber
\end{align}
The interpolation inequality \eqref{interpolation} provides us with
$$
|v(t)|_3|v(t)|_1+|v(t)|_2^2\leq 2|v(t)|_4|v(t)|_0,\quad |v(t)|_2|v(t)|_1^2\leq |v(t)|_0^2|v(t)|_4,\quad |v(t)|_1^4\leq |v(t)|_0^3|v(t)|_4.
$$
Thus,
\begin{equation*}
\sum_{\ell=1}^7\sum_{j\ge 2}\frac{1}{(j-1)!}|\mathcal{N}^ j_\ell(t)|_0\leq \sum_{j\ge 2}\frac{|v(t)|_4|v(t)|_0^{j-1}}{(j-1)!}
\left(
1+(j-1)\left[7+6(j-2)+(j-2)(j-3)\right]
\right).
\end{equation*}
We obtain that
\begin{equation*}
\frac{d}{dt}|v(t)|_0\leq |v(t)|_4\left[-1+\sum_{j\ge 2}\frac{|v(t)|_0^{j-1}}{(j-1)!}\left(
1+(j-1)\left[7+6(j-2)+(j-2)(j-3)\right]
\right)\right].
\end{equation*}
We compute
\begin{align}
\sum_{j\ge 2}\frac{|v(t)|_0^{j-1}}{(j-1)!}&=e^{|v(t)|_0}-1,\qquad
\sum_{j\ge 2}\frac{|v(t)|_0^{j-1}}{(j-2)!}=|v(t)|_0\,e^{|v(t)|_0},\nonumber\\
\sum_{j\ge 3}\frac{|v(t)|_0^{j-1}}{(j-3)!}&=|v(t)|_0^2\,e^{|v(t)|_0},\qquad
\sum_{j\ge 4}\frac{|v(t)|_0^{j-1}}{(j-4)!}=|v(t)|_0^3\,e^{|v(t)|_0}.\nonumber
\end{align}
As a consequence, we find that
$$
\frac{d}{dt}|v(t)|_0\leq |v(t)|_4\left[e^{|v(t)|_0}+7|v(t)|_0e^{|v(t)|_0}+6|v(t)|_0^2e^{|v(t)|_0}+|v(t)|_0^3e^{|v(t)|_0}-2\right].
$$
Let 
\[
\de(|v(t)|_0)=2-e^{|v(t)|_0}-7|v(t)|_0e^{|v(t)|_0}-6|v(t)|_0^2e^{|v(t)|_0}-|v(t)|_0^3e^{|v(t)|_0}.
\]
As $v_0$ satisfies 
$$
e^{|v_0|_0}-7|v_0|_0e^{|v_0|_0}-6|v_0|_0^2e^{|v_0|_0}-|v_0|_0^3e^{|v_0|_0}<2,
$$ 
we have that
$$
|v(t)|_0\leq |v_0|_0
$$
for $0\leq t\leq t^*$. To prove that $t^*=\infty$ we argue by contradiction. In the case where $t^*<\infty$, necessarily we have that
$$
|v(t^*)|_0=|v_0|_0,
$$
but then
$$
e^{|v(t^*)|_0}-7|v(t^*)|_0e^{|v(t^*)|_0}-6|v(t^*)|_0^2e^{|v(t^*)|_0}-|v(t^*)|_0^3e^{|v(t^*)|_0}<2
$$ 
and
$$
\frac{d}{dt}|v(t)|_0\bigg{|}_{t=t^*}<0,
$$
which is a contradiction with $t^*$ begin finite.

Then we have that
$$
|v(t)|_0+\delta(|v_0|_0)\int_0^t|v(s)|_4\,ds\leq |v_0|_0,
$$
where $0<\delta(|v_0|_0)$. Furthermore, using a standard Poincar\'e-like inequality, we find that 
$$
|v(t)|_0\leq |v_0|_0e^{-\delta(|v_0|_0) t}\qq\forall t\in [0,T].
$$
The inequality
\begin{equation*}
|\pat v(t)|_0\le C|v(t)|_4,
\end{equation*}
guarantees that $v_N\in \mW^{1,1}(0,T; \mA^0(\TT))$
\end{proof}

\subsection{Convergence of the approximate problems}

\begin{proposition}[Compactness results]\label{propo2}
Let $v_0\in \mA^0(\TT)$ be a function satisfying the condition in \eqref{small1} and $\partial$ be any differential operator of order one. Then, up to subsequences, we  have that every approximating sequence of solutions $\{v_N\}_N$ of \eqref{pbv1reg} verifies
\begin{align} 
\{ v_N\}_N\overset{*}{\rightharpoonup}  v\q &\text{in} \q \mL^{\infty}(0,T;\mL^\infty(\TT)),\label{linflinf}\\
\{ \partial^4 v_N\}_N\overset{*}{\rightharpoonup}  \partial^4 v\q& \text{in} \q \mathcal{M}(0,T;\mL^\infty(\TT)),\label{MW4}\\
\{\partial^3 v_N\}_N\overset{*}{\rightharpoonup}  \partial^3 v\q& \text{in} \q \mL^{\frac{4}{3}}(0,T;\mL^{\infty}(\TT)),\label{43}\\
\{ v_N\}_N\rightharpoonup v\q &\text{in} \q \mL^2(0,T,\mH^2(\TT)),\label{L2H2}\\
\{ v_N\}_N\rightarrow v\q &\text{in}\q\mL^2(0,T;\mH^r(\TT))\q\text{with}\q0\leq r<2. \label{L2Hr1}
\end{align}
Furthermore,
$$
v\in \mathscr{C}([0,T];\mL^2(\TT)).
$$
\end{proposition}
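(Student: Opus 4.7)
The plan is to combine the uniform bounds from Proposition \ref{propo1} --- $\{v_N\}_N$ bounded in $\mW^{1,1}(0,T;\mA^0)\cap \mL^1(0,T;\mA^4)$ with the pointwise estimate $|v_N(t)|_0\le |v_0|_0$ --- with a few interpolation inequalities, Banach--Alaoglu weak-$*$ extraction and the Aubin--Lions--Simon compactness lemma, selecting all convergences along a single subsequence via a diagonal argument. Since $\|u\|_{\mL^\infty}\le |u|_0$ on $\TT$, the sequence is bounded in $\mL^\infty(0,T;\mL^\infty(\TT))\cong(\mL^1(0,T;\mL^1(\TT)))^*$, and Banach--Alaoglu yields \eqref{linflinf}. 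The embedding $\mA^4\hookrightarrow \mW^{4,\infty}$ combined with the inclusion $\mL^1(0,T;\mW^{4,\infty})\subset \mathcal{M}(0,T;\mW^{4,\infty})$ yields \eqref{MW4} in the same fashion. For \eqref{43} the interpolation \eqref{interpolation} with $s=3$, $r=4$, $\theta=3/4$ together with the uniform $\mA^0$ bound give
\begin{equation*}
\int_0^T |v_N(t)|_3^{4/3}\,dt \le |v_0|_0^{1/3}\int_0^T |v_N(t)|_4\,dt \le C,
\end{equation*}
producing the uniform $\mL^{4/3}(0,T;\mW^{3,\infty})$ bound. Finally, by Cauchy--Schwarz in Fourier variables $\|u\|_{\mH^2}^2\le C\|u\|_{\mL^2}\|u\|_{\mH^4}\le C|u|_0\,|u|_4$, so
\begin{equation*}
\int_0^T\|v_N(t)\|_{\mH^2}^2\,dt\le C|v_0|_0\int_0^T|v_N(t)|_4\,dt\le C,
\end{equation*}
which proves \eqref{L2H2}. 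Uniqueness of limits in $\mathcal{D}'((0,T)\times\TT)$ then identifies every weak-$*$/weak limit with a single $v$ and its distributional derivatives.

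The core point is the strong convergence \eqref{L2Hr1}, which I would obtain via the Aubin--Lions--Simon lemma applied to the chain $\mH^2\hookrightarrow\hookrightarrow \mH^r\hookrightarrow \mH^{-1}$ (valid for $0\le r<2$, with the first inclusion compact by Rellich--Kondrachov on the torus). The $\mL^2(0,T;\mH^2)$ bound has just been established, while $\pat v_N$ is bounded in $\mL^1(0,T;\mA^0)\hookrightarrow\mL^1(0,T;\mH^{-1})$ thanks to the estimate $|\pat v_N(t)|_0\le C|v_N(t)|_4$ already appearing in the proof of Proposition \ref{propo1} together with the inclusion $\mA^0\hookrightarrow \mH^{-1}$ on mean-zero functions. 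The version of Simon's lemma that allows an $\mL^1$-in-time derivative then yields, along a further subsequence, strong convergence of $v_N$ to $v$ in $\mL^2(0,T;\mH^r)$ for every $0\le r<2$.

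The continuity $v\in \mathscr{C}([0,T];\mL^2(\TT))$ follows from the embedding $\mW^{1,1}(0,T;\mA^0)\hookrightarrow \mathscr{C}([0,T];\mA^0)\hookrightarrow \mathscr{C}([0,T];\mL^2(\TT))$, the last inclusion being a consequence of $\|u\|_{\mL^2}\le(2\pi)^{d/2}|u|_0$; the property is preserved under passage to the limit by weak-$*$ lower semicontinuity in $\mW^{1,1}(0,T;\mA^0)$. The step I expect to be the main obstacle is the Aubin--Lions--Simon application, because only an $\mL^1$-in-time control on the time derivative is at hand and one must use the version of the lemma tailored to that regularity and then identify the resulting distributional limit of $\pat v_N$ with $\pat v$ by testing against smooth functions. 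All the remaining extractions are routine once the bounds of Proposition \ref{propo1} are in place.
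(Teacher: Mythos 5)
Your treatment of the weak-$*$ extractions, of the $\mL^{4/3}(0,T;\mW^{3,\infty})$ bound and of the $\mL^2(0,T;\mH^2)$ bound coincides with the paper's. For the strong convergence \eqref{L2Hr1} you take a genuinely different (and legitimate) route: you bound $\pat v_N$ in $\mL^1(0,T;\mA^0)\hookrightarrow\mL^1(0,T;\mH^{-1})$ using $|\pat v_N(t)|_0\le C|v_N(t)|_4$ and invoke the version of Simon's lemma that tolerates an $\mL^1$-in-time derivative, whereas the paper proves a uniform bound for $\pat v_N$ in $\mL^2(0,T;\mH^{-2})$ by a duality estimate (testing against $\vp\in\mH^2$ with $\|\vp\|_{\mH^2}\le1$, integrating by parts twice and using $\|\N v\|_{\mL^4}^2\le c\|v\|_{\mH^2}|v|_0$), then applies \cite[Corollary 4]{S} with $X_0=\mH^2$, $X=\mL^2$, $X_1=\mH^{-2}$ and interpolates. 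Your version is more economical for the compactness step itself; the paper's $\mL^2_t\mH^{-2}_x$ estimate is the strictly stronger piece of information.

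The genuine gap is in the final claim $v\in\mathscr{C}([0,T];\mL^2(\TT))$. A uniform bound for $\{v_N\}_N$ in $\mW^{1,1}(0,T;\mA^0)$ does not pass to the limit: $\mL^1$ is not a dual space, bounded sequences in $\mW^{1,1}(0,T;X)$ need not have subsequences converging weakly in that space, and their limits are in general only of bounded variation in time (think of smooth mollifications of a step function in $t$). So the ``weak-$*$ lower semicontinuity in $\mW^{1,1}(0,T;\mA^0)$'' you appeal to is not available, and equicontinuity fails as well, since the $\mL^1$ bound on $t\mapsto|v_N(t)|_4$ carries no equi-integrability. This is exactly where the paper's $\mL^2(0,T;\mH^{-2})$ bound on $\pat v_N$ earns its keep: being an $\mL^2$ bound it survives the passage to the limit, so that $v\in\mL^2(0,T;\mH^2)$ with $\pat v\in\mL^2(0,T;\mH^{-2})$, and the standard Lions--Magenes embedding then yields $v\in\mathscr{C}([0,T];\mL^2(\TT))$. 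To repair your proof you should add that duality estimate (or some other bound on $\pat v_N$ in $\mL^p(0,T;\mH^{-s})$ with $p>1$); the rest of your argument stands.
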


\begin{proof}
We need to prove the compactness of the sequence of approximate solutions in appropriate spaces. Thanks to Proposition \ref{propo1}, the approximate solutions are uniformly bounded in 
\begin{equation}\label{boundedness}
\{ v_N\}_N\in \mL^\infty(0,T;\mA^0(\TT))\cap \mL^1(0,T;\mA^4(\TT)).
\end{equation}
Due to Banach-Alaoglu Theorem, this boundedness is enough to have \eqref{linflinf}.\\
Similarly, we have the uniform bound 
$$
\{ \partial^4 v_N\}_N\in \mathcal{M}(0,T;\mL^\infty(\TT)),
$$
where $\partial^4$ is a differential operator of order fourth. Using Banach-Alaoglu Theorem, we find \eqref{MW4}.\\
The interpolation inequality in \eqref{interpolation} gives us 
\begin{align}
\int_0^T|v_N(t)|_r^{\frac{4}{r}}\,dt&\le \int_0^T|v_N(t)|_0^{(4-r)\frac{1}{r}}|v_N(t)|_4\,dt\nonumber\\
&\le \sup_{t\in [0,T]}|v_N(t)|_0^{(4-r)\frac{1}{r}} \|v_N\|_{\mL^1(0,T;\mA^4(\TT))}\nonumber\\
&<c(v_0,r),\;\;0<r<4\,,\label{aux}
\end{align}
so \eqref{43} follows.\\
Furthermore, starting from \eqref{boundedness}, we will show
\begin{equation}\label{unif1}
\{ v_N\}_N\q\text{is uniformly bounded in}\q   \mL^2(0,T;\mH^{2}(\TT)),
\end{equation}
\begin{equation}\label{unif2}
\{\pat v_N\}_N\q\text{is uniformly bounded in}\q   \mL^2(0,T;\mH^{-2}(\TT)).
\end{equation}

Let us begin with the proof of \eqref{unif1}. Using the inequality \eqref{aux} with $r=2$ and the finiteness of the domain, we have that
\begin{equation}\label{regH2}
\int_0^T \|v_N(t)\|_{\mH^2}^2\,dt<c(v_0),
\end{equation}
concluding \eqref{unif1}. As far as \eqref{unif2} is concerned, being
\[
\|\pat v_N(t)\|_{\mH^{-2}}=\sup_{
\footnotesize
\begin{array}{c}
\vp\in \mH^2(\TT)\\ \|\vp\|_{\mH^2}\le 1
\end{array}
}\bigg{|}\langle\pat v_N(t),\vp\rangle\bigg{|},
\]
then we have
\begin{align}
\left|\int_{\TT}\pat v_N(t)\vp\,dx\right|
&=\left|\int_{\TT}\D \left(\sum_{j=0}^N\frac{(-v_N(t))^j}{j!}\right) \D\vp\,dx\right|\nonumber\\
&\le \int_{\TT}e^{|v_N(t)|}|\D v_N(t)||\D\vp|\,dx+\int_{\TT} e^{|v_N(t)|}|\N v_N(t)|^2|\D\vp|\,dx\,.\nonumber
\end{align}
We compute that
\begin{align}
\int_{\TT}e^{|v_N(t)|}|\D\vp|\left[|\D v_N(t)|+ |\N v_N(t)|^2\right]\,dx & \le e^{|v_N(t)|_{0}}
(\|v_N(t)\|_{\mH^2}+\|\nabla v_N(t)\|_{\mL^4}^2)\|\vp\|_{\mH^2} \nonumber \\
&\le C(1+|v_N(t)|_0)e^{|v_N(t)|_{0}} \|v_N(t)\|_{\mH^2},
\end{align}
where we have used the inequality 
\begin{equation}\label{smart1}
\|\N v(t)\|_{\mL^4}^2\le c\|v(t)\|_{ \mH^{2}}|v(t)|_0.
\end{equation}
The above inequality can be easily proved using integration by parts and H\"{o}lder's inequality and, thus, we left it for the interested reader.

As a consequence, we obtain that
\begin{align}
\|\pat v_N(t)\|_{\mH^{-2}}^2&\leq\sup_{
\footnotesize
\begin{array}{c}
\vp\in \mH^2(\TT)\\ \|\vp\|_{\mH^2}\le 1
\end{array}
}\left|\int_{\TT}\pat v_N(t)\vp\,dx\right|^2\nonumber\\
&\leq c e^{2\|v_N\|_{\mL^\infty(\mA^0)}} (1+\|v_N\|_{\mL^\infty(\mA^0)})^2\,\|v_N(t)\|_{\mH^2}^2,\nonumber
\end{align}
and the boundedness of $\int_0^T\|\pat v_N(t)\|_{\mH^{-2}}^2\,dt$ follows thanks to \eqref{regH2}. The fact that  $v\in \mathscr{C}([0,T];\mL^2(\TT))$ follows straightforwardly from \eqref{regH2} and the finiteness of $\int_0^T\|\pat v(t)\|_{\mH^{-2}}^2\,dt$. Then, up to a subsequence, we have both \eqref{L2H2} and
$$
\{ \pat v_N\}_N\rightharpoonup \pat v \qquad \text{ in\q $\mL^2(0,T;\mH^{-2}(\TT))$}\,.
$$
Invoking \cite[Corollary 4]{S}, with the choice
$$
X_0= \mH^2(\TT),\quad X=\mL^2(\TT),\quad X_1= \mH^{-2}(\TT),
$$
we obtain that
$$
\{ v_N\}_N\rightarrow v\qquad \text{ in}\q\mL^2(0,T;\mL^2(\TT)).
$$

Using interpolation in Sobolev spaces and the uniform boundedness in $\mL^2(0,T;\mH^2(\TT))$, we have that
\begin{align}
\int_{0}^T\|v_N(t)-v(t)\|_{\mH^r}^2\,dt&\leq \int_{0}^T\|v_N(t)-v(t)\|_{\mL^2}^{2-r}\|v_N(t)-v(t)\|_{\mH^2}^{r}\,dt\nonumber\\
&\le\left(
\int_{0}^T\|v_N(t)-v(t)\|_{\mL^2}^2\,dt
\right)^\frac{2-r}{2}
\left(
\int_{0}^T\|v_N(t)-v(t)\|_{\mH^2}^2\,dt
\right)^\frac{r}{2}\nonumber
\end{align}
for every $0\leq r<2$. Thus \eqref{L2Hr1} follows.
\end{proof}

\subsection{Proof of Theorem \ref{theorem1}}\label{pttl1}

We have to pass to the limit in $N$ to conclude the existence of the weak solution. First, let us remark that
$$
\int_0^T\int_\TT  \left(\sum_{j=0}^N\frac{(-v_N)^j}{j!}-e^{-v_N}\right)\D^2\varphi \,dx\,dt\rightarrow 0
$$
as $N\rightarrow \infty$. We note that
$$
e^{x}-e^{y}=\int_0^1 \partial_{\lambda}e^{\lambda x+(1-\lambda)y }\,d\lambda=\int_0^1 e^{\lambda x+(1-\lambda)y }(x-y)\,d\lambda.
$$
Using  Propositions \ref{propo1} and \ref{propo2}, together with the previous equality, we have that
$$
\left|\int_{\TT}(e^{v_N(t)}-e^{v(t)})\Delta^2\varphi \,dx\right|\leq e^{\|v_N\|_{\mL^\infty(\mL^\infty)}+\|v\|_{\mL^\infty(\mL^\infty)}}\|v_N(t)-v(t)\|_{\mL^2}\|\Delta^2\varphi(t)\|_{\mL^2}
$$
for every $\vp\in \mW^{1,1}(0,T;\mL^1(\TT))\cap\mL^1(0,T;\mW^{4,1}(\TT))$.
Thus,
$$
\lim_{N\rightarrow \infty}\left|\int_0^T\int_{\TT}(e^{v_N}-e^{v})\Delta^2\varphi \,dx\,dt\right|=0,
$$
and we can pass to the limit in the weak formulation. The regularity in \eqref{ex1} follows from Proposition \ref{propo2} as well. Finally, the weakly-$*$ lower semicontinuity of the norm guarantees that
$$
\|v(t)\|_{\mL^\infty}\leq \lim_{N\rightarrow\infty} \|v_N(t)\|_{\mL^\infty}\leq |v_0|_{0}e^{-\delta(|v_0|_0) t},
$$
so we recover \eqref{dec1A0}.

\subsection{Estimates for the regularized problem in the Sobolev space $\mathscr{H}^2$}
\begin{proposition}[Estimates in Sobolev spaces]\label{propo4}
Let $v_0\in \mA^0(\TT)\cap \mH^2(\TT)$ be a function satisfying the condition in \eqref{small1}. Then, every approximating sequence of solutions $\{v_N\}_N$ of \eqref{pbv1reg} is uniformly bounded in
\[
\{v_N\}_N\in \mL^\infty(0,T;\mH^2(\TT))\cap \mL^2(0,T;{\mH}^4(\TT)).
\]
Furthermore, we have that
\begin{equation*} 
\|v_N(t)\|_{\mH^2}\leq C(|v_0|_0)\qq\forall t\in[0,T].
\end{equation*}
\end{proposition}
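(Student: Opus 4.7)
The strategy is to run an energy estimate for $\|v_N\|_{\mH^2}^2$ on the Fourier side, in complete parallel with the Wiener-algebra argument of Proposition~\ref{propo1}. The smallness $|v_N(t)|_0\le|v_0|_0$ already established in Proposition~\ref{propo1} will absorb every nonlinear contribution into the $\|v_N\|_{\mH^4}^2$ dissipation.

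Starting from the Fourier representation
\[
\pat\fv_N(k)=-|k|^4\fv_N(k)+\sum_{r=1}^{7}\sum_{j=2}^{N}\frac{(-1)^j}{(j-1)!}\fn_r^j(k)
\]
derived inside the proof of Proposition~\ref{propo1}, I would multiply by $|k|^4\overline{\fv_N(k)}$, take real parts, and sum in $k\in\ZZ$. Using $\pat|\fv_N(k)|^2=2\Re(\overline{\fv_N(k)}\pat\fv_N(k))$ this yields
\[
\tfrac12\tfrac{d}{dt}\|v_N\|_{\mH^2}^2+\|v_N\|_{\mH^4}^2\le\sum_{r=1}^{7}\sum_{j=2}^{N}\frac{1}{(j-1)!}\sum_{k\in\ZZ}|k|^4|\fv_N(k)|\,|\fn_r^j(k)|.
\]
Applying Cauchy--Schwarz in $k$ bounds each inner sum by $\|v_N\|_{\mH^4}\|\fn_r^j\|_{\ell^2}$. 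Since every $\mathcal{N}_r^j$ is a multilinear convolution of $j-1$ Wiener-algebra factors with one factor carrying the top derivatives, Young's convolution inequality $\ell^1\ast\cdots\ast\ell^1\ast\ell^2\hookrightarrow\ell^2$ combined with the interpolation \eqref{interpolation} on the lower-order Fourier multipliers produces $\|\fn_r^j\|_{\ell^2}\le c_{r,j}\,|v_N|_0^{j-1}\|v_N\|_{\mH^4}$, with the same combinatorial structure as in Proposition~\ref{propo1}.

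Summing over $r$ and $j$ then produces an explicit function $G(|v_N|_0)$, of the same exponential-polynomial form as $2-\de(|v_0|_0)$ in \eqref{small1}, with $G(0)=0$, such that
\[
\tfrac12\tfrac{d}{dt}\|v_N\|_{\mH^2}^2+\bigl(1-G(|v_N|_0)\bigr)\|v_N\|_{\mH^4}^2\le 0.
\]
The condition \eqref{small1} (possibly on tightening the explicit threshold, which stays inside the same exponential-polynomial series) ensures $1-G(|v_0|_0)>0$; together with $|v_N(t)|_0\le|v_0|_0$, this keeps the prefactor strictly positive uniformly in $t$ and $N$. Because $v_N$ is mean-zero on $\TT$, the Poincar\'e-type bound $\|v_N\|_{\mH^2}\le\|v_N\|_{\mH^4}$ holds, so integrating the previous differential inequality in time yields
\[
\|v_N(t)\|_{\mH^2}^2+2\bigl(1-G(|v_0|_0)\bigr)\int_0^t\|v_N(s)\|_{\mH^4}^2\,ds\le\|v_0\|_{\mH^2}^2,
\]
which is exactly the uniform $\mL^\infty(0,T;\mH^2)\cap\mL^2(0,T;\mH^4)$ bound claimed.

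The main obstacle I expect is the combinatorial bookkeeping for $\mathcal{N}_r^j$, $r\ge 2$: one must distribute the powers of $|k|$ across the $j$ factors so that precisely one of them ends up in $\|v_N\|_{\mH^4}$ while the remaining ones, after interpolation against $|v_N|_0$, collapse to a clean factor $|v_N|_0^{j-1}$. This is the same redistribution already performed in the proof of Proposition~\ref{propo1}, now on the $\ell^2$ rather than the $\ell^1$ side, and Young's inequality $\ell^1\ast\ell^2\to\ell^2$ does most of the work automatically, so no new mechanism is required. A physical-space alternative is to multiply the regularised equation by $\Delta^2 v_N$, integrate by parts, and estimate the resulting products through H\"older and Gagliardo--Nirenberg inequalities like \eqref{smart1}; the result is the same, but the Fourier route keeps the dependence on $|v_0|_0$ explicit and dovetails with the preceding propositions.
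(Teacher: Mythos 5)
There is a genuine gap: the key multilinear estimate $\|\fn_r^j\|_{\ell^2}\le c_{r,j}|v_N|_0^{j-1}\|v_N\|_{\mH^4}$ fails for every term in which the four derivatives are distributed over two or more factors (i.e.\ all $r\ge 2$). Young's inequality $\ell^1\ast\cdots\ast\ell^1\ast\ell^2\hookrightarrow\ell^2$ forces all but one factor into $\ell^1$-based Wiener norms; for instance for $\mathcal{N}_3^j\sim v^{j-2}|\D v|^2$ the best it gives is $\|\fn_3^j\|_{\ell^2}\le c_j|v_N|_0^{j-2}|v_N|_2\|v_N\|_{\mH^2}$, and after interpolating via \eqref{interpolation} and $\|v_N\|_{\mH^2}\le\|v_N\|_{\mL^2}^{1/2}\|v_N\|_{\mH^4}^{1/2}$ one lands on $c_j|v_N|_0^{j-1}|v_N|_4^{1/2}\|v_N\|_{\mH^4}^{1/2}$. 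Since on $\ZZ$ the $\ell^1$ norm is \emph{not} controlled by the $\ell^2$ norm, $|v_N|_4\not\le C\|v_N\|_{\mH^4}$, so the stray factor $|v_N|_4^{1/2}$ cannot be converted into $\|v_N\|_{\mH^4}^{1/2}$ and your clean inequality $\tfrac12\tfrac{d}{dt}\|v_N\|_{\mH^2}^2+(1-G(|v_N|_0))\|v_N\|_{\mH^4}^2\le0$ does not follow. The same obstruction appears for $\mathcal{N}_2^j,\dots,\mathcal{N}_7^j$. What survives is a right-hand side of the form $C(|v_0|_0)\,|v_N|_4^{\theta}\|v_N\|_{\mH^4}^{2-\theta}$ with $0<\theta\le 1$ (modulo powers of $|v_N|_0$), which must then be split by Young's inequality into $\eps\|v_N\|_{\mH^4}^2+C(|v_0|_0)\,|v_N|_4\,\|v_N\|_{\mH^2}^2$ and closed by Gronwall using the bound $\int_0^T|v_N(t)|_4\,dt\le|v_0|_0/\de(|v_0|_0)$ from Proposition \ref{propo1} --- an ingredient you never invoke (you only use $|v_N(t)|_0\le|v_0|_0$). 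With that correction the final bound is $\|v_N(t)\|_{\mH^2}^2\le\|v_0\|_{\mH^2}^2e^{C(|v_0|_0)\int_0^T|v_N|_4\,dt}$, not the monotone decay $\le\|v_0\|_{\mH^2}^2$ you assert; relatedly, your suggestion that \eqref{small1} may need "tightening" would prove a weaker proposition than the one stated.

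For comparison, the paper's proof works in physical space: it multiplies by $\D^2 v_N$, absorbs only the genuinely top-order term $\int(\sum_n(-v)^{n-1}/(n-1)!)|\D^2v|^2$ into the dissipation (using the sign of the truncated exponential, not a smallness cancellation), bounds all lower-order terms by $c(|v_0|_0)\,|v_N|_4\,\|v_N\|_{\mH^2}^2$ via \eqref{smart1}--\eqref{smart2}, and concludes by Gronwall exactly as above. It also needs a step absent from your sketch: the \emph{medium-order} term $I_1=4\int(\sum_n\cdots)v,_jv,_{iij}v,_{\ell\ell rr}\,dx$ is not directly estimable and requires three integrations by parts to expose a hidden perfect derivative $\partial_j(|\partial^3v|^2)$ before it can be reduced to the Gronwall form. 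On the Fourier side the analogous term can instead be handled by the $\ell^2$-duality plus Young splitting, so a corrected Fourier-space argument is viable and would indeed parallel Proposition \ref{propo1} more closely than the paper's route --- but only once the Gronwall structure is put back in.
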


\begin{proof}[Sketch of the proof]
Let us briefly sketch the idea of the proof. First we multiply the equation by $\Delta^2 v$ and integrate by parts. This step, due to the nonlinearity of the equation, will create many different terms. These terms can be grouped in three categories: low (having a bi-laplacian times terms with 1 or 2 derivatives), medium (having a bi-laplacian times a term with three derivatives) and high order (having the bi-laplacian squared). The high order terms containing $|\Delta^2 v|^2$ will be absorbed by the linear terms due to the size restriction of the initial data. The low order terms (see $I_2, I_3, I_4, I_5$ and $I_6$ below) akin to
$$
\int |v|^n|\partial^2 v|^2\Delta^2v
$$
will be estimated as
$$
|v|_0^n\|v\|_{\mH^2}^2|v|_4.
$$
Then, taking advantage of $\int_0^t|v(s)|_4ds<\infty$ and using Gronwall inequality, we bound their contribution. The only remaining terms are the \emph{medium order} terms 
$$
\int v^n v,_j v,_{iij} v,_{\ell\ell r r}dx.
$$
(see $I_1$ below). For these terms we have to integrate by parts (several times) to find a perfect derivative hidden in $I_1$. Once this structure of perfect derivative is obtained, we can integrate by parts and obtain terms like
$$
\int\partial(v^n\partial v)|\partial^3 v|^2.
$$
Then we can conclude by noticing that
$$
\int\partial(v^n\partial v)|\partial^3 v|^2\leq \|v\|_{\mH^2}|v|_0^n\|\partial^3 v\|_{\mL^4}^2\leq C\|v\|_{\mH^2}^2|v|_0^n|\partial^2 v|_{2}.
$$
\end{proof}

\begin{proof}[Proof of Proposition \ref{propo4}]
We omit the time variable when not needed. We consider the solutions of \eqref{pbv1reg}. We want to obtain appropriate bounds on Sobolev spaces such that we can pass to the limit in $N$. As before, to simplify the notation, we write $v$ instead $v_N$. We multiply the equation in \eqref{pbv1reg} by $\D^2 v_N$ and integrate over $(0,T)\times \TT$, obtaining
\begin{align}
\frac{1}{2}\frac{d}{dt}\int_\TT |\D v|^2\,dx&=\int_\TT \D^2 \left(\sum_{n=0}^N\frac{(-v)^n}{n!}\right)\D^2v\,dx\nonumber\\
&=
-\int_\TT \sum_{n=1}^N\frac{(-v)^{n-1}}{(n-1)!}
|v,_{\ell\ell rr}|^2\,dx\nonumber\\
&\q+
\int_\TT \sum_{n=2}^N\frac{1}{(n-2)!}
\Bigl\{
(-v)^{n-2}\left[ 4v,_{iij}v,_j+v,_{ii}v,_{jj}+2v,_{ij}v,_{ij} \right]\nonumber\\
& \qq\qq\qq
-2(n-2)(-v)^{n-3}\left[ v,_{ii}v,_jv,_j +2v,_{ij}v,_jv,_i \right]
\nonumber\\
&\qq\qq\qq + (n-2)(n-3)(-v)^{n-4}v,_jv,_jv,_i v,_i\Bigr\}
v,_{\ell\ell rr}\,dx
\nonumber\\
&= 
-\int_\TT \sum_{n=1}^N\frac{(-v)^{n-1}}{(n-1)!}
|v,_{\ell\ell rr}|^2\,dx+\sum_{m=1}^6 I_m\,.\label{H2H4n}
\end{align}

We have that
\[
\forall\eps>0\q\exists \bar{N}\in\mathbb{N}:\q \forall N\ge \bar{N}\Rightarrow\q\left|  \sum_{n=1}^N\frac{(-v)^{n-1}}{(n-1)!}-e^{-v} \right|<\eps,
\]
thus, if $N$ is chosen large enough,
\[
-\int_\TT \sum_{n=1}^N\frac{(-v)^{n-1}}{(n-1)!}
|v,_{\ell\ell rr}|^2\,dx<0.
\]
We begin dealing with the $I_m$ terms. It holds 
\begin{align}
I_2+I_3&=\int_\TT \left(\sum_{n=2}^N\frac{(-v)^{n-2}}{(n-2)!}\right) \left(v,_{ii}v,_{jj}+2v,_{ij}v,_{ij}\right)v,_{\ell\ell rr}\,dx\nonumber\\
&\le ce^{|v|_0}|v|_4\|v\|_{ \mH^{2}}^2,\nonumber
\\
I_4+I_5&=-2\int_\TT \left(\sum_{n=3}^N\frac{(-v)^{n-3}}{(n-3)!}\right)\left(v,_{ii}v,_jv,_j+2v,_{ij}v,_jv,_i\right)v,_{\ell\ell rr}\,dx\nonumber\\
&\le c|v|_0 e^{|v|_0}|v|_4\|v\|_{ \mH^{2}}^2,\nonumber 
\\
I_6&=\int_\TT \left(\sum_{n=4}^N\frac{(-v)^{n-4}}{(n-4)!}\right)v,_jv,_jv,_\ell v,_\ell v,_{\ell\ell rr}\,dx\nonumber\\
&\le  c|v|_0 e^{|v|_0}|v|_4\|v\|_{ \mH^{2}}^2,\nonumber
\end{align}
where we have used the inequality \eqref{smart1}. So far, we have that $$\sum_{m=2}^6 I_m\le c\left(e^{|v|_0}+ |v|_0 e^{|v|_0} \right)|v|_4\|v\|_{ \mH^{2}}^2.$$ We are left with
\[
\begin{split}
I_1&=4\int_\TT \left(\sum_{n=2}^N\frac{(-v)^{n-2}}{(n-2)!}\right)v,_jv,_{iij} 
v,_{\ell\ell rr}\,dx\,.
\end{split}
\]

These terms are the \emph{medium} order terms mentioned in the sketch of the proof above. Our goal here is to manipulate the previous expression until we find a perfect $j-$derivative hidden in $I_1$ (a term like $\partial_j((\partial_i\partial_\ell\partial_r v)^2)$). Then integrating by parts once again we can charge in the low order elements (those with at most 1 derivative) and find integrals containing two elements having three derivatives times a number of low order terms. We observe that, in order the perfect derivative structure arises, we have to integrate by parts three times (in $i$, $\ell$ and $r$).

With this goal in mind we can start a rather tedious but elementary integration by parts,
\begin{align}
\frac{I_1}{4}&=-\int_\TT 
\left(\left(\sum_{n=2}^N\frac{(-v)^{n-2}}{(n-2)!}\right)v,_jv,_{iij}\right),_rv,_{\ell\ell r}\,dx\nonumber\\
&=\int_\TT 
\left[v,_r\left(\sum_{n=3}^N\frac{(-v)^{n-3}}{(n-3)!}\right)
v,_jv,_{iij}-
\left(\sum_{n=2}^N\frac{(-v)^{n-2}}{(n-2)!}\right)v,_{jr}v,_{iij}\right. \nonumber\\
&\qq\qq\left.-\left(\sum_{n=2}^N\frac{(-v)^{n-2}}{(n-2)!}\right)v,_{j}v,_{iijr}\right
]v,_{\ell\ell r}\,dx\nonumber.
\end{align}
We observe that we are left another two integration by parts so we can move an $i-$derivative between $v,_{iijr}$ and $v,_{\ell\ell r}$ and, after that, a $\ell-$derivative between $v,_{i\ell\ell r}$ and $v,_{ijr}$. We continue moving the $i-$derivative:
\begin{align}
\frac{I_1}{4}&=\int_\TT 
\left[
v,_r\left(\sum_{n=3}^N\frac{(-v)^{n-3}}{(n-3)!}\right)
v,_jv,_{iij}-
\left(\sum_{n=2}^N\frac{(-v)^{n-2}}{(n-2)!}\right)v,_{jr}v,_{iij}\right]
v,_{\ell\ell r}\,dx\nonumber\\
&\q+\int_\TT v,_{ijr}\left(\left(\sum_{n=2}^N\frac{(-v)^{n-2}}{(n-2)!}\right)v,_{j}v,_{\ell\ell r}\right),_i
\,dx\nonumber.
\end{align}
Now we move the $\ell-$derivative:
\begin{align}
\frac{I_1}{4}&=\int_\TT 
\left[
v,_r\left(\sum_{n=3}^N\frac{(-v)^{n-3}}{(n-3)!}\right)
v,_jv,_{iij}-
\left(\sum_{n=2}^N\frac{(-v)^{n-2}}{(n-2)!}\right)v,_{jr}v,_{iij}\right]v,_{\ell\ell r}\,dx\nonumber\\
&\q+\int_\TT v,_{ijr}\left(\left(\sum_{n=2}^N\frac{(-v)^{n-2}}{(n-2)!}\right)v,_{j}\right),_iv,_{\ell\ell r}
\,dx \nonumber\\
&\q -\int_\TT v,_{i\ell r}\left(\left(\sum_{n=2}^N\frac{(-v)^{n-2}}{(n-2)!}\right)v,_{j}v,_{ijr}\right),_\ell
\,dx\nonumber.
\end{align}
Expanding the latter expression, we find that
\begin{align}
\frac{I_1}{4}&=\int_\TT 
\left[
v,_r\left(\sum_{n=3}^N\frac{(-v)^{n-3}}{(n-3)!}\right)
v,_jv,_{iij}-
\left(\sum_{n=2}^N\frac{(-v)^{n-2}}{(n-2)!}\right)v,_{jr}v,_{iij}\right]v,_{\ell\ell r}\,dx\nonumber\\
&\q+\int_\TT v,_{ijr}\left(\left(\sum_{n=2}^N\frac{(-v)^{n-2}}{(n-2)!}\right)v,_{j}\right),_iv,_{\ell\ell r}
\,dx \nonumber\\
&\q -\int_\TT v,_{i\ell r}v,_{ijr}\left(\left(\sum_{n=2}^N\frac{(-v)^{n-2}}{(n-2)!}\right)v,_{j}\right),_\ell
\,dx\nonumber\\
&\q-\int_\TT \left(\sum_{n=2}^N\frac{(-v)^{n-2}}{(n-2)!}\right)v,_{j}v,_{i\ell r}v,_{ijr\ell} 
\,dx\nonumber.\end{align}
We note the perfect $j-$derivative structure present in the last term in the previous expression, and exploit that in a last integration by parts:
\begin{align}
I_1&=4\int_\TT 
\left[v,_r\left(
\sum_{n=3}^N\frac{(-v)^{n-3}}{(n-3)!}\right)
v,_jv,_{iij}-
\left(\sum_{n=2}^N\frac{(-v)^{n-2}}{(n-2)!}\right)v,_{jr}v,_{iij}\right]v,_{\ell\ell r}\,dx\nonumber\\
&\quad+4\int_\TT v,_{ijr}\left(\left(\sum_{n=2}^N\frac{(-v)^{n-2}}{(n-2)!}\right)v,_{j}\right),_iv,_{\ell\ell r}
\,dx\nonumber\\
&\q-4\int_\TT v,_{i\ell r}v,_{ijr}\left(\left(\sum_{n=2}^N\frac{(-v)^{n-2}}{(n-2)!}\right)v,_{j}\right),_\ell 
\,dx\nonumber\\
&\quad +2\int_\TT \left(\left(\sum_{n=2}^N\frac{(-v)^{n-2}}{(n-2)!}\right)v,_{j}\right),_jv,_{i\ell r}v,_{ir\ell}
\,dx.\nonumber
\end{align}
We recall the following inequality
\begin{equation}\label{smart2}
\|\N v(t)\|_{\mL^4}^2\le c\|v(t)\|_{\mL^2}|v(t)|_2.
\end{equation}
Now we can estimate this terms using \eqref{smart1} and \eqref{smart2} as
$$
I_1\le ce^{|v|_0}|v|_4\|v\|_{ \mH^2}^2\,.
$$
Putting all together, recovering the $N$ and the $t$ in the notation and using  Theorem \ref{theorem1}, we have that
\[
\esssup_{0\leq t\leq T}\|v_N(t)\|_{\mH^2}^2\leq\|v_0\|_{\mH^2}^2e^{c(|v_0|_0)\int_0^T|v_N(t)|_4\,dt}\leq C(|v_0|_0).
\]
Finally,taking $0<\eps\ll1$ small enough, we have that
\[
\sum_{n=1}^N\frac{(-v_N)^{n-1}}{(n-1)!}-e^{-v_N}+e^{-v_N}>e^{-v_N}-\eps>0
\]
so
\begin{align}
\left(e^{-\|v_N\|_{\mL^\infty(\mL^\infty)}}-\eps\right)
\int_0^T\int_\TT 
|(v_N),_{\ell\ell rr}|^2\,dx\,dt 
&\le
\int_0^T\int_\TT \sum_{n=1}^N\frac{(-v_N)^{n-1}}{(n-1)!}
|(v_N),_{\ell\ell rr}|^2\,dx\,dt\nonumber\\
&\le \int_0^T\sum_{m=1}^6 I_m(t)\,dt\nonumber\\
&\le c.\nonumber
\end{align}
Thus, we deduce that $v_N\in \mL^2(0,T;\mH^4(\TT))$. 
\end{proof}

\subsection{Convergence of the approximate problems}
 
\begin{proposition}[Compactness results]\label{propo5}
Let $v_0\in \mA^0(\TT)\cap \mH^2(\TT)$ be a function satisfying the condition in \eqref{small1} and $\partial$ be any differential operator of order one. Then, up to subsequences, we  have that every approximating sequence of solutions $\{v_N\}_N$ of \eqref{pbv1reg} verifies
\begin{equation}\label{L2Hr}
\{ v_N\}_N\rightarrow v\q \text{ in}\q\mL^2(0,T;\mH^r(\TT)),\;0\leq r<4.
\end{equation}
Furthermore, the limit function $v$ satisfies
$$
v\in\mathscr{C}([0,T];\mH^2(\TT)).
$$
\end{proposition}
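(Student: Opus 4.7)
The plan is to combine the refined Sobolev bounds of Proposition \ref{propo4} with the compactness already obtained in Proposition \ref{propo2} by means of Sobolev interpolation, and then to upgrade the $\mL^2$ continuity of $v$ to $\mH^2$ continuity via a Lions--Magenes type embedding.

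First I would establish \eqref{L2Hr}. Proposition \ref{propo2} gives $v_N\to v$ strongly in $\mL^2(0,T;\mL^2(\TT))$, while Proposition \ref{propo4} ensures a uniform bound in $\mL^2(0,T;\mH^4(\TT))$. The standard Sobolev interpolation $\|w\|_{\mH^r}\le \|w\|_{\mL^2}^{(4-r)/4}\|w\|_{\mH^4}^{r/4}$ applied to $w=v_N-v$, combined with H\"older's inequality in time with the conjugate exponents $4/(4-r)$ and $4/r$ (legitimate because $r<4$), yields
\[
\int_0^T \|v_N(t)-v(t)\|_{\mH^r}^2\,dt \le \left(\int_0^T\|v_N-v\|_{\mL^2}^2\,dt\right)^{\frac{4-r}{4}}\left(\int_0^T\|v_N-v\|_{\mH^4}^2\,dt\right)^{\frac{r}{4}}.
\]
The first factor tends to zero by Proposition \ref{propo2}, the second is uniformly bounded by Proposition \ref{propo4}, so \eqref{L2Hr} follows.

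For the continuity $v\in \mathscr{C}([0,T];\mH^2(\TT))$, I would first improve the bound on $\pat v_N$ obtained in Proposition \ref{propo2} from $\mL^2(0,T;\mH^{-2})$ to $\mL^2(0,T;\mL^2)$. Since
\[
\pat v_N = \D^2\left(\sum_{j=0}^N \frac{(-v_N)^j}{j!}\right),
\]
the same expansion already carried out in Proposition \ref{propo4} rewrites the right-hand side as a finite sum of multilinear terms of the form (polynomial in $v_N$) $\times$ (product of derivatives of $v_N$ of total order at most four). Using Proposition \ref{propo1} for the uniform $|v_N|_0$ control, Proposition \ref{propo4} for the $\mL^\infty(\mH^2)\cap \mL^2(\mH^4)$ bound on $v_N$, together with the Gagliardo--Nirenberg inequality \eqref{smart1} and Sobolev embedding (in dimensions $d\le 2$) to handle the intermediate-order factors, one deduces that $\pat v_N$ is uniformly bounded in $\mL^2(0,T;\mL^2(\TT))$, and the same bound is inherited by $\pat v$ in the limit. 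I would then apply the Lions--Magenes embedding theorem to $\D v$ with the Gelfand triple $\mH^2(\TT)\subset \mL^2(\TT)\subset \mH^{-2}(\TT)$: since Proposition \ref{propo4} provides $\D v\in \mL^2(0,T;\mH^2(\TT))$ and the improved estimate provides $\pat \D v\in \mL^2(0,T;\mH^{-2}(\TT))$, the theorem furnishes $\D v\in \mathscr{C}([0,T];\mL^2(\TT))$, which together with the $\mL^2$ continuity from Proposition \ref{propo2} gives $v\in \mathscr{C}([0,T];\mH^2(\TT))$.

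The main technical obstacle is the uniform $\mL^2(\mL^2)$ estimate for $\pat v_N$: one must control an infinite family of multilinear terms generated by the exponential nonlinearity, and in particular absorb the top-order contributions of the type $v_N^k v_N{,}_{iijj}$ uniformly in $N$. This absorption is made possible by the uniform $\mL^\infty$ decay \eqref{dec1A0} from Theorem \ref{theorem1}, which ensures that the partial sums $\sum_{j=0}^N(-v_N)^j/j!$ stay bounded in $\mL^\infty$ by a constant depending only on $|v_0|_0$.
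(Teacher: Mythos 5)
Your proposal is correct and follows essentially the same route as the paper: Sobolev interpolation between the strong convergence in $\mL^2(0,T;\mL^2(\TT))$ from Proposition \ref{propo2} and the uniform $\mL^2(0,T;\mH^4(\TT))$ bound from Proposition \ref{propo4} for \eqref{L2Hr}, and the Lions--Magenes continuity lemma applied to $\D v$ (using $\D v\in \mL^2(0,T;\mH^2(\TT))$ and $\pat \D v\in \mL^2(0,T;\mH^{-2}(\TT))$) for the $\mathscr{C}([0,T];\mH^2(\TT))$ regularity. You in fact supply more detail than the paper does, in particular the multilinear estimates behind the uniform $\mL^2(0,T;\mL^2(\TT))$ bound on $\pat v_N$, which the paper leaves implicit.
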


\begin{proof} The proof of \eqref{L2Hr} is similar to Proposition \ref{propo2}. The continuity is obtained from the fact that
$$
\Delta v\in\mL^2(0,T;\mH^2(\TT)),\q\partial_t\Delta v\in\mL^2(0,T;\mH^{-2}(\TT)).
$$
\end{proof}

\subsection{Proof of Theorem \ref{theorem2}}\label{proofreg1}
The regularity \eqref{reg1} follows from Propositions \ref{propo4} and  \ref{propo5} and we are left with the proof of \eqref{H2H4} and \eqref{expHr}. The equality in \eqref{H2H4n} and the bound of the $I_m(t)$ terms in Proposition \ref{propo4} provide us with
\begin{equation*}
\frac{1}{2}\| v_N(t)\|^2_{\mH^2}+e^{-\|v\|_{\mL^\infty(\mL^\infty)}}\int_0^T\|v_N(t)\|_{\mH^4}^2\,dt\le c.
\end{equation*}

Then, the limit procedure can be achieved as in the proof of Theorem \ref{theorem1} and \eqref{H2H4} follows. Finally, the decay in \eqref{expHr} can be obtained using interpolation in Sobolev spaces
$$
\|v(t)\|_{\mH^r}\leq \liminf_{N\rightarrow\infty}\|v_N(t)\|_{\mH^r}\leq C(|v_0|_{0})e^{-\delta \left(1-\frac{r}{2}\right) t},\;0\leq r\leq 2.
$$

\section{The problem \eqref{2v}}\label{pb2}
We recall the definition of the binomial coefficient:
\[
\binom{n}{k}=\frac{n!}{k!(n-k)!}
\]
and compute
$$
\frac{1}{(1+v)^3}=\sum_{j= 0}^\infty(-1)^j\binom{j+2}{j}v^j=1-3v+\sum_{j=2}^\infty(-1)^j\binom{j+2}{j}v^{j}.
$$
Then, the equation \eqref{2v} becomes
\begin{equation*}
\pat v
=-3\D^2 v+\sum_{j=2}^{\infty}(-1)^j\binom{j+2}{j}\D^2v^{j} .
\end{equation*}
The approximating problem we consider is the following:
\begin{equation}\label{pbv2reg}
\begin{cases}
\begin{array}{lll}
\ds \pat v_N= \sum_{j= 0}^N(-1)^j\binom{j+2}{j}\D^2v_N^{j}  & \text{ in } [0,T]\times\TT, &\\
\ds v_N(0,x)= v_0(x)=\frac{1}{u_0(x)}-1 &\text{ in }\TT.&
\end{array}
\end{cases}
\end{equation}
For this problem, we can construct a solution $v_N$ following a standard Galerkin approach (one can also use the mollifier approach).

\subsection{Estimates for the regularized problem in the Wiener algebra $\mathscr{A}^0$}
 \begin{proposition}[Estimates in the Wiener algebra]\label{sap2}
Let $v_0\in  \mA^0(\TT)$ be a function satisfying the condition in \eqref{small2}. Then, every approximating sequence of solutions $\{v_N\}_N$ of \eqref{pbv2reg} is uniformly bounded in
\[
 \mW^{1,1}(0,T; \mA^0(\TT))\cap \mL^1(0,T;\mathscr{{\mA}}^4(\TT)).
\]
Furthermore, we have that
\begin{equation}\label{decexp2}
|v_N(t)|_0\leq |v_0|_0e^{- \delta(|v_0|_0) t}\qq\forall t\in[0,T]
\end{equation}
with $\delta(|v_0|_0)$ defined in \eqref{small2}. 
\end{proposition}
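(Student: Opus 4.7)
The plan is to mirror the argument of Proposition~\ref{propo1} line by line, with the factorial weights $1/(j-1)!$ there replaced by the polynomial weights $\binom{j+2}{2}=(j+2)(j+1)/2$, and with the resulting exponential series replaced by the rational series obtained from differentiating $(1-x)^{-3}$. Because the $j$-weights are now only polynomial (instead of factorially decaying), the radius of convergence of the relevant sums becomes the explicit constraint $|v_0|_0<1$.

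First I would expand $\D^2 v_N^j$ by Leibniz: the structure is identical to \eqref{eq:v}, producing the same seven monomial families, multiplied by polynomial coefficients in $j$ of degree at most three. The $j=1$ term contributes the linear dissipation $-3|k|^4\fv(k)$ in Fourier. For $j\ge 2$, using \eqref{dert}, Tonelli's theorem and the convolution estimates exactly as in Proposition~\ref{propo1}, I would obtain
$$
|\fn_r^j(t)|_0\le P_r(j)\,|v(t)|_0^{j-n_r}|v(t)|_4,\qquad r=1,\ldots,7,
$$
for certain polynomials $P_r$ of degree at most three and exponents $n_r\in\{1,2,3,4\}$. The interpolation inequality \eqref{interpolation} then collapses the intermediate norms $|v|_1,|v|_2,|v|_3$ onto $|v|_0$ and $|v|_4$, reducing each estimate to the schematic form $P_r(j)\,|v|_0^{j-1}|v|_4$.

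The key step is the resummation in $j$. Starting from
$$
\sum_{j\ge 0}\binom{j+2}{2}x^j=\frac{1}{(1-x)^3}
$$
and differentiating in $x$ one, two, and three times, one obtains closed-form expressions of rational type involving $(1-x)^{-4},(1-x)^{-5},(1-x)^{-6}$ that, once multiplied by the combinatorial prefactors coming from the $P_r$ and reassembled, should yield exactly
$$
\frac{d}{dt}|v_N(t)|_0\le -\de(|v_N(t)|_0)\,|v_N(t)|_4,
$$
with $\de$ as in \eqref{small2}. The main obstacle is the careful bookkeeping of these eight series (one linear plus seven nonlinear families) in order to verify that the resulting numerical coefficients are precisely $(1,28,120,120)$ and that the overall rational prefactor is indeed $(1-|v|_0)^{-4}$.

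Once this differential inequality is in place, the proof closes as in Proposition~\ref{propo1}. The continuity and monotonicity of $\de$ on $[0,1)$, together with strict positivity at $t=0$, allow a standard contradiction argument on the first exit time of $|v_N(t)|_0$ from $[0,|v_0|_0]$: this shows $|v_N(t)|_0\le |v_0|_0$ for all $t\in[0,T]$, hence $\de(|v_N(t)|_0)\ge \de(|v_0|_0)>0$. Since $v_N$ has zero mean on $\TT$ (integrate \eqref{pbv2reg} in space), the Poincar\'e-type inequality $|v_N|_0\le |v_N|_4$ converts this into the exponential decay \eqref{decexp2}; meanwhile $\int_0^T|v_N|_4\,dt<\infty$ (from integrating the differential inequality) together with $|\pat v_N|_0\le C|v_N|_4$ (obtained by bounding the right-hand side of \eqref{pbv2reg} in $\mA^0$ as above) give the claimed uniform boundedness in $\mW^{1,1}(0,T;\mA^0(\TT))\cap \mL^1(0,T;\mA^4(\TT))$.
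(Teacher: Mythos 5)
Your plan follows the paper's proof essentially verbatim: the same Leibniz expansion of $\D^2 v_N^j$ into the seven monomial families of \eqref{eq:v}, the same Fourier/Tonelli/interpolation reductions to bounds of the form $P_r(j)\,|v|_0^{j-1}|v|_4$, the same resummation of the $j$-series into rational functions of $|v|_0$ yielding $\de$ as in \eqref{small2}, and the same exit-time and Poincar\'e arguments to close. One small bookkeeping correction: the highest family (the quartic-in-gradient term, with weight $(j+2)(j+1)j(j-1)(j-2)(j-3)$) requires going up to a sixth derivative of the generating function, producing a $(1-|v|_0)^{-7}$ denominator rather than stopping at $(1-|v|_0)^{-6}$, which is exactly what the term $120\,|v_0|_0^3/(1-|v_0|_0)^3$ inside the overall $(1-|v_0|_0)^{-4}$ prefactor of \eqref{small2} accounts for.
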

\begin{proof}
To simplify the notation, we write $v$ instead of $v_N$. We rewrite $\D^2 v^{j}$ (computed in \eqref{eq:v}) as
\begin{align}
\D^2v^j
&=\binom{j}{j-1}v^{j-1}v,_{ii\ell\ell}+2\binom{j}{j-2}v^{j-2}\left[|v,_{ii}|^2+4v,_{ii\ell}v,_\ell+2v,_{i\ell}v,_{i\ell}\right]\nonumber\\
&\q+12\binom{j}{j-3}v^{j-3}\left[ 2v,_{i\ell}v,_{\ell}v,_i+v,_\ell v,_\ell v,_{ii} \right] +24\binom{j}{j-4}v^{j-4}v,_\ell v,_\ell v,_i v,_i\nonumber
\end{align}
and set
\begin{equation}\label{eq2}
\pat v=-3\D^2 v+
\sum_{j= 2}^N(-1)^j\sum_{m=1}^7 \mathcal{N}^j_m 
\end{equation}
where
\begin{align}
\mathcal{N}^j_1&=\binom{j+2}{j}\binom{j}{j-1}v^{j-1}v,_{ii\ell\ell}=3\binom{j+2}{j-1}v^{j-1}v,_{ii\ell\ell},\nonumber
\\
\mathcal{N}^j_2&=2\binom{j+2}{j}\binom{j}{j-2}v^{j-2} |v,_{ii}|^2=12\binom{j+2}{j-2}v^{j-2} |v,_{ii}|^2,\nonumber
\\
\mathcal{N}^j_3&=8\binom{j+2}{j}\binom{j}{j-2}v^{j-2}v,_{ii\ell}v,_\ell=48\binom{j+2}{j-2}v^{j-2}v,_{ii\ell}v,_\ell,\nonumber
\\
\mathcal{N}^j_4&=4\binom{j+2}{j}\binom{j}{j-2}v^{j-2}v,_{i\ell}v,_{i\ell}=24\binom{j+2}{j-2}v^{j-2}v,_{i\ell}v,_{i\ell},\nonumber
\\
\mathcal{N}^j_5&=24\binom{j+2}{j}\binom{j}{j-3}v^{j-3}v,_{i\ell}v,_\ell v,_i=240\binom{j+2}{j-3}v^{j-3}v,_{i\ell}v,_\ell v,_i,\nonumber
\\
\mathcal{N}^j_6&=12\binom{j+2}{j}\binom{j}{j-3}v^{j-3}v,_\ell v,_\ell v,_{ii}=120\binom{j+2}{j-3}v^{j-3}v,_\ell v,_\ell v,_{ii},\nonumber
\\
\mathcal{N}^j_7&=24\binom{j+2}{j}\binom{j}{j-4}v^{j-4}v,_\ell v,_\ell v,_i v,_i=360\binom{j+2}{j-4}v^{j-4}v,_\ell v,_\ell v,_i v,_i,\nonumber
\end{align}
thanks to the binomial coefficient identity
\[
\binom{n}{m}\binom{m}{k}=\binom{n}{k}\binom{n-k}{m-k}.
\]
Again, we omit to write the time variable when not necessary. The previous equality \eqref{eq2} in Fourier variables reads as 
\begin{equation}\label{feq2}
\pat\fv(k)=-3|k|^4\fv(k)+\sum_{m=1}^{7}\sum_{j= 2}^N(-1)^j\fa^j_m(k) 
\end{equation}
where 
\begin{align}
\fa^j_1(k)&=3\binom{j+2}{j-1}\sum_{\al^1\in\ZZ}\ldots\sum_{\al^{j-1}\in\ZZ} \fv(\al^{j-1})\prod_{n=1}^{j-2}\fv(\al^n-\al^{n+1})|k-\al^1|^4\fv(k-\al^1),\nonumber
\\
\fa^j_2(k)&=12\binom{j+2}{j-2}\sum_{\al^1\in\ZZ}\ldots\sum_{\al^{j-1}\in\ZZ} \fv(\al^{j-1})\prod_{n=2}^{j-2}\fv(\al^n-\al^{n+1})
|\al^1-\al^2|^2\fv(\al^1-\al^2)
|k-\al^1|^2\fv(k-\al^1),\nonumber
\\
\fa^j_3(k)&=48\binom{j+2}{j-2}\sum_{\al^1\in\ZZ}\ldots\sum_{\al^{j-1}\in\ZZ}\fv(\al^{j-1})\prod_{n=2}^{j-2}\fv(\al^n-\al^{n+1})
|\al^1-\al^2|^2(\al^1_\ell-\al^2_\ell)\nonumber\\
&\qq\qq\qq\qq\qq\qq\qq\qq\qq\times\fv(\al^1-\al^2)
(k_\ell-\al^1_\ell)\fv(k-\al^1),\nonumber
\\
\fa^j_4(k)&=24\binom{j+2}{j-2}\sum_{\al^1\in\ZZ}\ldots\sum_{\al^{j-1}\in\ZZ}\fv(\al^{j-1})\prod_{n=2}^{j-2}\fv(\al^n-\al^{n+1})
(\al^1_\ell-\al^2_\ell)(\al^1_i-\al^2_i)\fv(\al^1-\al^2)\nonumber\\
&\qq\qq\qq\qq\qq\qq\qq\qq\qq\times
(k_\ell-\al^1_\ell)(k_i-\al^1_i)\fv(k-\al^1),\nonumber
\\
\fa^j_5(k)&=240\binom{j+2}{j-3}\sum_{\al^1\in\ZZ}\ldots\sum_{\al^{j-1}\in\ZZ} \fv(\al^{j-1})\prod_{n=3}^{j-2}\fv(\al^n-\al^{n+1})
(\al^2_\ell-\al^3_\ell)(\al^2_i-\al^3_i)\fv(\al^2-\al^3)\nonumber
\\
&\qq\qq\qq\qq\qq\qq\qq\qq\qq\times(\al^1_\ell-\al^2_\ell)\fv(\al^1-\al^2)
(k_i-\al^1_i)\fv(k-\al^1),\nonumber
\\
\fa^j_6(k)&=120\binom{j+2}{j-3}\sum_{\al^1\in\ZZ}\ldots\sum_{\al^{j-1}\in\ZZ} \fv(\al^{j-1})\prod_{n=3}^{j-2}\fv(\al^n-\al^{n+1})
(\al^2_\ell-\al^3_\ell)\fv(\al^2-\al^3)\nonumber\\
&\qq\qq\qq\qq\qq\qq\qq\qq\qq\times
(\al^1_\ell-\al^2_\ell)\fv(\al^1-\al^2)
|k-\al^1|^2\fv(k-\al^1),\nonumber
\\
\fa^j_7(k)&=360\binom{j+2}{j-4}\sum_{\al^1\in\ZZ}\ldots\sum_{\al^{j-1}\in\ZZ} \fv(\al^{j-1})\prod_{n=4}^{j-2}\fv(\al^n-\al^{n+1})
(\al^3_\ell-\al^4_\ell)\fv(\al^3-\al^4)
(\al^2_\ell-\al^3_\ell)\fv(\al^2-\al^3)\nonumber\\
&\qq\qq\qq\qq\qq\qq\qq\qq\qq\times
(\al^1_i-\al^2_i)\fv(\al^1-\al^2)
(k_i-\al^1_i)\fv(k-\al^1).\nonumber
\end{align}
We recall \eqref{dert}, reintroduce the time variable in the notation and estimate \eqref{feq2} as follows:
\begin{equation}\label{feq2dis}
\pat|v(t)|_0\le-3|v(t)|_4+\sum_{m=1}^{7}\sum_{j= 2}^\infty|\mathcal{N}^j_m(t)|_0.
\end{equation}
We are left with the estimates of $|N^j_m(t)|_0$. We first apply Tonelli's Theorem and then the interpolation inequality \eqref{interpolation} to obtain that
\begin{align}
|\mathcal{N}^j_1(t)|_0&\le 3\binom{j+2}{j-1} |v(t)|_0^{j-1}|v(t)|_4=\frac{(j+2)(j+1)j}{2}|v(t)|_0^{j-1}|v(t)|_4,\nonumber
\\
|\mathcal{N}^j_2(t)|_0&\le 12\binom{j+2}{j-2}|v(t)|_0^{j-2}|v(t)|_2^2
\le \frac{(j+2)(j+1)j(j-1)}{2}|v(t)|_0^{j-1}|v(t)|_4,\nonumber
\\
|\mathcal{N}^j_3(t)|_0&
\le 48\binom{j+2}{j-2}|v(t)|_0^{j-2}|v(t)|_3|v(t)|_1
\le 2(j+2)(j+1)j(j-1)|v(t)|_0^{j-1}|v(t)|_4,\nonumber\\
|\mathcal{N}^j_4(t)|_0&\le
 24\binom{j+2}{j-2}|v(t)|_0^{j-2}|v(t)|_2^2
\le (j+2)(j+1)j(j-1)|v(t)|_0^{j-1}|v(t)|_4,\nonumber
\\
|\mathcal{N}^j_5(t)|_0&\le 240\binom{j+2}{j-3}|v|_0^{j-3}|v(t)|_2|v(t)|_1^2
\le 2(j+2)(j+1)j(j-1)(j-2)|v(t)|_0^{j-1}|v(t)|_4,\nonumber
\\
|\mathcal{N}^j_6(t)|_0&\le 120\binom{j+2}{j-3}|v(t)|_0^{j-3}|v(t)|_1^2|v(t)|_2
\le (j+2)(j+1)j(j-1)(j-2)|v(t)|_0^{j-1}|v(t)|_4,\nonumber
\\
|\mathcal{N}^j_7(t)|_0&\le 360\binom{j+2}{j-4}|v(t)|_0^{j-4}|v(t)|_1^4
\le 
\frac{(j+2)(j+1)j(j-1)(j-2)(j-3)}{2}|v(t)|_0^{j-1}|v(t)|_4
.\nonumber
\end{align}
Thus, 
\begin{align}
&\sum_{m=1}^{7}|\mathcal{N}^j_m(t)|_0\nonumber\\
&\le 
\frac{(j+2)(j+1)j}{2}
\left[
1+7(j-1)+6(j-1)(j-2)+(j-1)(j-2)(j-3)
\right]|v(t)|_0^{j-1}|v(t)|_4.\nonumber
\end{align}
Collecting the previous steps, we have
\begin{align}
\pat|v(t)|_0&\le |v(t)|_4\left\{\sum_{j= 2}^\infty\frac{(j+2)(j+1)j}{2}
\left[
1+7(j-1)+6(j-1)(j-2)\right.\right.\nonumber\\
&\qq\qq\qq\qq\qq\qq\q \left.\left.+(j-1)(j-2)(j-3)
\right]|v(t)|_0^{j-1}
-3\right\}.\nonumber
\end{align}
We recall the following identities for power series
\begin{align}
\sum_{j\ge2} (j+2)(j+1)j\,w^{j-1}&=\left(w^2 \left(\frac{1}{1-w}-1-w\right)\right)^{'''}\nonumber
\\
&=\frac{3!}{(1-w)^4}-3!,\nonumber
\\
w\sum_{j\ge2} (j+2)(j+1)j(j-1)\,w^{j-2}&=w\left(w^2 \left(\frac{1}{1-w}-1-w\right)\right)^{iv)}\nonumber
\\
&=4!\frac{w}{(1-w)^5},\nonumber
\\
w^2\sum_{j\ge3} (j+2)(j+1)j(j-1)(j-2)\,w^{j-3}&=w^2\left(w^2 \left(\frac{1}{1-w}-1-w-w^2\right)\right)^{v)}\nonumber
\\
&=5!\frac{w^2}{(1-w)^6},\nonumber
\\
w^3\sum_{j\ge4} (j+2)(j+1)j(j-1)(j-2)(j-3)\,w^{j-4}&=w^3\left(w^2 \left(\frac{1}{1-w}-1-w-w^2-w^3\right)\right)^{vi)}\nonumber
\\
&=6!\frac{w^3}{(1-w)^7}.\nonumber
\end{align}
We set 
\begin{align}
&\de(|v(t)|_0)\nonumber\\
&:=3-\sum_{j\ge 2}\frac{(j+2)(j+1)j}{2}
\left[
1+7(j-1)+6(j-1)(j-2)+(j-1)(j-2)(j-3)
\right]|v(t)|_0^{j-1},\nonumber\\
&=6-\frac{3}{(1-|v(t)|_0)^{4}}\left( 1+28\frac{|v(t)|_0}{(1-|v(t)|_0)}+120\frac{|v(t)|_0^2}{(1-|v(t)|_0)^2}+120\frac{|v(t)|_0^3}{(1-|v(t)|_0)^3}
\right),\nonumber
\end{align}
so that, thanks to \eqref{small2} and reasoning as in Proposition \ref{propo1} (see the $t^*$ argument), we are allowed to integrate in time \eqref{feq2dis} and thanks to the definition of $\de(\cdot)$, we get
\begin{equation*}
|v(t)|_0 +\de(|v_0|_0)\int_0^t|v(s)|_4\,ds\le |v_0|_0
\end{equation*}
and the uniform boundedness in $\mL^1(0,T;\mathscr{{\mA}}^4(\TT))$ follows together with the exponential decay in \eqref{decexp2}.

The uniform boundedness $ \mW^{1,1}(0,T; \mA^0(\TT))$ follows from the inequality
\begin{equation*}
|\pat v(t)|_0\le C|v(t)|_4.
\end{equation*}
\end{proof}

\subsection{Convergence of the approximate problems}

\begin{proposition}[Compactness results]\label{compacidad2}
Let $v_0\in \mA^0(\TT)$ be a function satisfying the condition in \eqref{small2} and $\partial$ be any differential operator of order one. Then, up to subsequences, we  have that every approximating sequence of solutions $\{v_N\}_N$ of \eqref{pbv2reg} verifies
\begin{align} 
\{ v_N\}_N\overset{*}{\rightharpoonup}  v\q &\text{in} \q \mL^{\infty}(0,T;\mL^\infty(\TT)),\label{comp1b}\\
\{ \partial^4 v_N\}_N\overset{*}{\rightharpoonup}  \partial^4 v\q& \text{in} \q \mathcal{M}(0,T;\mL^\infty(\TT)),\label{comp2b}\\
\{\partial^3 v_N\}_N\overset{*}{\rightharpoonup}  \partial^3 v\q& \text{in} \q \mL^{\frac{4}{3}}(0,T;\mL^{\infty}(\TT)),\label{comp3b}\\
\{ v_N\}_N\rightharpoonup v\q &\text{in} \q \mL^2(0,T,\mH^2(\TT)),\label{comp4b}\\
\{ v_N\}_N\rightarrow v\q &\text{in}\q\mL^2(0,T;\mH^r(\TT))\q\text{with}\q0\leq r<2.\label{comp5b}
\end{align}
Furthermore,
$$
v\in \mathscr{C}([0,T];\mL^2(\TT)).
$$
\end{proposition}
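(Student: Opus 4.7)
The plan closely parallels the proof of Proposition \ref{propo2}. From Proposition \ref{sap2} we already have the uniform bounds
$$
\{v_N\}_N\in\mL^\infty(0,T;\mA^0(\TT))\cap\mL^1(0,T;\mA^4(\TT))\cap \mW^{1,1}(0,T;\mA^0(\TT)).
$$
Since $\mA^0\hookrightarrow\mL^\infty$ and $|\partial^4 v_N|\le |v_N|_4$ pointwise, Banach--Alaoglu immediately yields \eqref{comp1b} and \eqref{comp2b}. Applying the interpolation inequality \eqref{interpolation} in the form \eqref{aux} produces $\int_0^T|v_N(t)|_3^{4/3}\,dt\le C$ and thus \eqref{comp3b}. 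Finally, interpolating with $s=2$, $r=4$ and using the embedding $\mA^2\hookrightarrow\mH^2$ on the torus gives $\int_0^T\|v_N(t)\|_{\mH^2}^2\,dt\le C(v_0)$, and a further application of Banach--Alaoglu yields \eqref{comp4b}.

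The crucial step is \eqref{comp5b}, which requires an Aubin--Lions/Simon compactness argument as in Proposition \ref{propo2}: I will apply \cite[Corollary 4]{S} with $X_0=\mH^2(\TT)$, $X=\mL^2(\TT)$, $X_1=\mH^{-2}(\TT)$. This demands a uniform bound of $\{\pat v_N\}_N$ in $\mL^2(0,T;\mH^{-2}(\TT))$. Writing the truncated nonlinearity $F_N(v)=\sum_{j=0}^N(-1)^j\binom{j+2}{j}v^j$, the equation \eqref{pbv2reg} and two integrations by parts give, for any test $\vp\in\mH^2(\TT)$,
$$
\left|\int_\TT\pat v_N(t)\vp\,dx\right|=\left|\int_\TT \D F_N(v_N)\D\vp\,dx\right|\le\int_\TT\left(|F_N'(v_N)||\D v_N|+|F_N''(v_N)||\N v_N|^2\right)|\D\vp|\,dx.
$$
Because $\|v_N\|_{\mL^\infty}\le |v_N|_0\le |v_0|_0<1$ by the Wiener embedding and Proposition \ref{sap2}, the derivatives of $F_N$ are bounded pointwise by the convergent series
$$
|F_N'(v_N)|\le\frac{3}{(1-|v_0|_0)^4},\qquad |F_N''(v_N)|\le\frac{12}{(1-|v_0|_0)^5},
$$
uniformly in $N$. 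Combining this with the inequality $\|\N v_N\|_{\mL^4}^2\le c\|v_N\|_{\mH^2}|v_N|_0$ (as in \eqref{smart1}), one concludes
$$
\|\pat v_N(t)\|_{\mH^{-2}}\le C(|v_0|_0)\|v_N(t)\|_{\mH^2},
$$
and squaring and integrating in time then uses the $\mL^2(\mH^2)$ bound already established. Aubin--Lions delivers strong convergence in $\mL^2(0,T;\mL^2(\TT))$, and interpolating between the strong $\mL^2$ limit and the weak $\mL^2(0,T;\mH^2)$ limit exactly as in the proof of Proposition \ref{propo2} produces \eqref{comp5b} for every $0\le r<2$. The continuity $v\in\mathscr{C}([0,T];\mL^2(\TT))$ is then immediate from $v\in\mL^2(0,T;\mH^2)$ and $\pat v\in\mL^2(0,T;\mH^{-2})$.

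The main obstacle I foresee is ensuring that the pointwise control of $F_N'(v_N)$ and $F_N''(v_N)$ is truly uniform in $N$: this relies crucially on the strict inequality $|v_0|_0<1$ from Theorem \ref{teo2ex}, which via the Wiener embedding keeps $v_N$ uniformly away from the singularity $-1$ of $(1+v)^{-3}$, so that the truncated Taylor series (and its first two derivatives) are controlled by the corresponding absolutely convergent geometric series with remainder uniformly small in $N$.
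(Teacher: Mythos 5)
Your proposal is correct and follows essentially the same route as the paper: the same uniform $\mA^0$/$\mA^4$ bounds and interpolation for \eqref{comp1b}--\eqref{comp4b}, and for \eqref{comp5b} the same Simon compactness argument with the $\mL^2(0,T;\mH^{-2}(\TT))$ bound on $\pat v_N$ obtained by integrating by parts twice and controlling the first and second derivatives of the truncated series by $3/(1-|v_0|_0)^4$ and $12/(1-|v_0|_0)^5$ uniformly in $N$, exactly as in the paper's estimate via the finite geometric-type sums. No gaps.
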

\begin{proof}
Being the approximating solutions $\{v_N\}_N$ uniformly bounded in 
\begin{equation*} 
\mL^\infty(0,T; \mA^0(\TT))\cap \mL^1(0,T;\mathscr{\mA}^4(\TT))
\end{equation*}
thanks to Proposition \ref{sap2}, we can reason as in Proposition \ref{propo2} in order to prove that \eqref{comp1b}, \eqref{comp2b}, \eqref{comp3b} and \eqref{comp4b} hold. As far as \eqref{comp5b} is concerned we need  
\[
\{ v_N\}_N\rightarrow v\q \text{ in }\mL^2(0,T;\mL^2(\TT)).
\]
In this way, we can reason as in Proposition \ref{propo2} and prove a uniform bound for $\{ \pat v_N\}_N$ in $\mL^2(0,T, \mH^{-2}(\TT))$ so the assertion follows from \cite[Corollary $4$]{S}.
We compute
\begin{align}
\left|\int_{\TT}\pat v_N(t)\vp\,dx\right|
&=\left|\int_{\TT}\D \left(\sum_{j= 0}^N(-1)^j\binom{j+2}{j}v_N^{j}(t)\right)\,\D\vp\,dx\right|\nonumber\\
&\le \frac{1}{2}
\left[\int_{\TT}|\D v_N(t)||\Delta\vp|
 \sum_{j= 0}^N(j+2)(j+1)j|v_N(t)|^{j-1}\,dx\right.\nonumber\\
&\qq\qq\qq+\left.
\int_{\TT}|\N v_N(t)|^2|\Delta\vp|
 \sum_{j= 0}^N(j+2)(j+1)j(j-1)|v_N(t)|^{j-2}\,dx\label{riesz2}
\right],
\end{align}
with $\vp\in  \mH^2(\TT)$ and such that $\|\vp\|_{ \mH^2}\le 1$. We recall the following expressions (valid for for $0<w<1$) for finite sums
\begin{align}
\sum_{j=0}^{N}(j+2)(j+1)j\,w^{j-1}&=
\left(w^2 \frac{1-w^{N+1}}{1-w} \right)^{'''}=w^2\left( \frac{1-w^{N+1}}{1-w} \right)^{'''}+6w\left( \frac{1-w^{N+1}}{1-w} \right)^{''}\nonumber\\
&\q+
6\left( \frac{1-w^{N+1}}{1-w} \right)^{'},\nonumber\\
\sum_{j=0}^{N}(j+2)(j+1)j(j-1)\,w^{j-2}&=\left(w^2 \frac{1-w^{N+1}}{1-w} \right)^{iv)}=w^2\left( \frac{1-w^{N+1}}{1-w} \right)^{iv)}+8w\left( \frac{1-w^{N+1}}{1-w} \right)^{'''}\nonumber\\
&\q+
12\left( \frac{1-w^{N+1}}{1-w} \right)^{''}.\nonumber
\end{align}
For $0<w<1$, it holds that
\begin{align}
\left( \frac{1-w^{N+1}}{1-w} \right)^{'}&\le
\frac{1-w^{N+1}}{(1-w)^2},\nonumber
\\
\left( \frac{1-w^{N+1}}{1-w} \right)^{''}&\le 
2\,\frac{1-w^{N+1}}{(1-w)^3},\nonumber
\\
\left( \frac{1-w^{N+1}}{1-w} \right)^{'''}&\le
6\,\frac{1-w^{N+1}}{(1-w)^{4}},\nonumber
\\
\left( \frac{1-w^{N+1}}{1-w} \right)^{iv)}&\le 
24\,\frac{1-w^{N+1}}{(1-w)^{5}}.\nonumber
\end{align}
We thus estimate
\begin{align}
\sum_{j=0}^{N}(j+2)(j+1)j\,w^{j-1}&\le
6w^2\,\frac{1-w^{N+1}}{(1-w)^{4}}
+12w\,\frac{1-w^{N+1}}{(1-w)^3}+
6\,\frac{1-w^{N+1}}{(1-w)^2}\nonumber\\
&=6\,\frac{1-w^{N+1}}{(1-w)^{4}}\nonumber\\
&\le \frac{6}{(1-w)^{4}},\nonumber\\
\sum_{j=0}^{N}(j+2)(j+1)j(j-1)\,w^{j-2}&\le
24w^2\,\frac{1-w^{N+1}}{(1-w)^{5}}
+48w\,\frac{1-w^{N+1}}{(1-w)^{4}}+
 24\,\frac{1-w^{N+1}}{(1-w)^3}\nonumber\\
&=24\,\frac{1-w^{N+1}}{(1-w)^{5}}\nonumber\\
&\le \frac{24}{(1-w)^{5}}.
\end{align}
We come back to \eqref{riesz2} and, letting $w=|v_N(t)|$, we deduce
\begin{align}
\left|\int_{\TT}\pat v_N(t)\vp\,dx\right|&\le
24 
\int_{\TT}\left[|\D v_N(t)| + |\N v_N(t)|^2\right]\frac{|\Delta\vp|}{(1-|v_N(t)|)^{5}}\, dx\nonumber\\
&\le  \frac{24\,\|\vp\|_{ \mH^2}}{(1-|v_N(t)|_0)^{5}}(\|v_N(t)\|_{ \mH^2}+\|\nabla v_N(t)\|_{\mL^4}^2)\nonumber\\
&\le\frac{24\|\vp\|_{ \mH^2}}{(1-|v_N(t)|_0)^{4}}\|v_N(t)\|_{ \mH^2},\nonumber
\end{align}
thanks to \eqref{smart1} too.\\
Finally, we get
\[
\|\pat v_N(t)\|_{ \mH^{-2}}^2\leq\sup_{
\footnotesize
\begin{array}{c}
\vp\in  \mH^2(\TT)\\ \|\vp\|_{ \mH^2}\le 1
\end{array}
}\left|\int_{\TT}\pat v_N(t)\vp\,dx\right|^2\leq 
\frac{24^2}{(1-|v_N(t)|_0)^{8}}\|v_N(t)\|_{ \mH^2}^2
\]
from which we deduce that
\[
\int_0^T \|\pat v_N(t)\|_{ \mH^{-2}}^2\,dt\le \frac{24^2}{(1-\|v_N\|_{{\mL^\infty(\mA^0)}})^{8}}\|v_N\|_{\mL^2( \mH^2)}^2<c
\]
and 
\[
\{\pat v_N\}_N\q\text{is uniformly bounded in}\q   \mL^2(0,T;\mH^{-2}(\TT)).
\]

\end{proof}

\subsection{Proof of Theorem \ref{teo2ex}}\label{proofteo2ex}
Let $\vp\in C^\infty([0,T)\times\TT)$. Again, we observe that
\[
\int_0^T\int_\TT\left( \sum_{j= 0}^N(-1)^j\binom{j+2}{j} v_N^j
-\frac{1}{(1+v_N)^3}\right)\D^2\vp\,dx\,dt\to 0
\]
when $N\to\infty$. Then, we consider 
\begin{align*}
&\int_\TT \left| \frac{1}{(1+v_N(t))^3}-\frac{1}{(1+v(t))^3} \right||\D^2\vp(t)|\,dx\\
&\le
\int_\TT
\frac{|v^3(t)-v_N^3(t)|+3|v^2(t)-v_N^2(t)|+3|v(t)-v_N(t)|}{|1+v_N(t)|^3|1+v(t)|^3}
|\D^2\vp(t)|\,dx\\
&\le c(\|v_N(t)\|_{\mL^\infty},\|v(t)\|_{\mL^\infty})\|v(t)-v_N(t)\|_{\mL^2}\|\D^2\vp(t)\|_{\mL^2}
\end{align*}
and the claim follows thanks to the convergence results proved in Proposition \ref{compacidad2}.\\
The regularity in \eqref{ex2} follows from Proposition \ref{compacidad2} and the exponential decay in \eqref{decA0} can be deduced from \eqref{decexp2} reasoning as in Subsection \ref{pttl1}.

\subsection{Estimates for the regularized problem in the Sobolev space $\mathscr{H}^2$}

\begin{proposition}[Estimates in Sobolev spaces]\label{propo4pb2}
Let $v_0\in\mA^0(\TT)\cap \mH^2(\TT)$ be a function satisfying the condition \eqref{small2}. Then, every approximating sequence of solutions $\{v_N\}_N$ of \eqref{2v} is uniformly bounded in
\[
\mL^\infty(0,T; \mH^2(\TT))\cap \mL^2(0,T;\mathscr{\mH}^4(\TT)).
\]
Furthermore, we have that
\[
\|v_N(t)\|_{\mH^2}\le C(|v_0|_0)\qq\forall t\in[0,T].
\]
\end{proposition}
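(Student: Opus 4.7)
The plan is to mirror the strategy of Proposition \ref{propo4}. I would perform an $\mH^2$ energy estimate by multiplying \eqref{pbv2reg} by $\Delta^2 v_N$ and integrating over $\TT$, so that
\[
\frac{1}{2}\frac{d}{dt}\|\Delta v_N\|_{\mL^2}^2 = -3\|\Delta^2 v_N\|_{\mL^2}^2 + \sum_{j=2}^N (-1)^j\binom{j+2}{j}\int_\TT \Delta^2 v_N^{\,j}\,\Delta^2 v_N\,dx.
\]
Then I would substitute the seven-term decomposition of $\D^2 v^j$ already computed in the proof of Proposition \ref{sap2}, giving nonlinear pieces $\mathcal{N}_1^j,\dots,\mathcal{N}_7^j$. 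The contribution of $\mathcal{N}_1^j$ combines with the linear $-3\Delta^2 v_N$ to yield
\[
\sum_{j\ge 1}(-1)^j 3\binom{j+2}{j-1}v_N^{\,j-1}|\Delta^2 v_N|^2 \;\longrightarrow\; -\int_\TT \frac{3}{(1+v_N)^4}|\Delta^2 v_N|^2\,dx,
\]
modulo an $N$-dependent tail that is negligible once $N$ is large because Proposition \ref{sap2} guarantees $|v_N(t)|_0\le |v_0|_0<1$. This strict sign produces the desired dissipation $-c(|v_0|_0)\|\Delta^2 v_N\|_{\mL^2}^2$.

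Next I would split the remaining $\mathcal{N}_m^j$ contributions exactly as in Proposition \ref{propo4}. The \emph{low-order} pieces $\mathcal{N}_2^j,\mathcal{N}_4^j,\mathcal{N}_5^j,\mathcal{N}_6^j,\mathcal{N}_7^j$ give integrals of the schematic form $\int v^n|\partial^2 v|^2\Delta^2 v$ or $\int v^n|\nabla v|^4\Delta^2 v$, all bounded by $C|v_N|_0^n\|v_N\|_{\mH^2}^2|v_N|_4$ via the Gagliardo–Nirenberg estimate \eqref{smart1}. The delicate \emph{medium-order} pieces coming from $\mathcal{N}_3^j$, namely $\int v^{j-2}v,_\ell v,_{ii\ell}\,\Delta^2 v\,dx$, have total derivative count three plus four; I would integrate by parts three times (in $r$, then $i$, then $\ell$) to expose a perfect $j$-derivative $\partial_j\bigl((\partial_i\partial_\ell\partial_r v)^2\bigr)$, integrate by parts once more to move this derivative off, and then control the result by $C|v_N|_0^n\|v_N\|_{\mH^2}^2|v_N|_4$ using both \eqref{smart1} and the $\mL^4$-bound on $\partial^3 v$.

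Summing in $j$, the combinatorial coefficients $(j+2)(j+1)j(j-1)(j-2)(j-3)|v|_0^{\,j-k}$ reassemble into rational functions of $1/(1-|v_N|_0)$ exactly as in the power series computations at the end of the proof of Proposition \ref{sap2}; the smallness $|v_0|_0<1$ guarantees convergence and gives a final inequality of the form
\[
\frac{d}{dt}\|\Delta v_N\|_{\mL^2}^2 + \frac{c}{(1+|v_0|_0)^4}\|\Delta^2 v_N\|_{\mL^2}^2 \le C(|v_0|_0)\,|v_N(t)|_4\,\|v_N(t)\|_{\mH^2}^2.
\]
Gronwall's lemma together with the bound $\int_0^T |v_N(t)|_4\,dt\le |v_0|_0/\delta(|v_0|_0)$ from Proposition \ref{sap2} yields a uniform $\mL^\infty(0,T;\mH^2)$ estimate, and a further time-integration of the same inequality gives the uniform $\mL^2(0,T;\mH^4)$ bound.

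The main obstacle will be the bookkeeping in the medium-order terms: unlike the exponential case of Proposition \ref{propo4}, where the factorials $1/(j-1)!$ suffice to sum the series, here one has to track binomial coefficients through three integrations by parts and then resum them as derivatives of the geometric series $\sum w^j=1/(1-w)$. The smallness condition \eqref{small2} is exactly the assumption that makes these rational sums finite, so keeping the constants consistent with the already-established Wiener-algebra decay $|v_N(t)|_0\le |v_0|_0 e^{-\delta(|v_0|_0)t}$ is where the argument must be handled carefully.
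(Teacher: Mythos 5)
Your proposal follows essentially the same route as the paper's proof: multiply by $\Delta^2 v_N$, absorb the $\mathcal{N}_1^j$ contribution into the dissipative term $-3\int_\TT(1+v_N)^{-4}|\Delta^2 v_N|^2\,dx$ (up to a negligible tail for large $N$), bound the low-order terms via \eqref{smart1}, treat the single medium-order term by three integrations by parts to expose a perfect-derivative structure, resum the binomial coefficients into powers of $(1-|v_N|_0)^{-1}$, and close with Gronwall using $\int_0^T|v_N(t)|_4\,dt<\infty$ from Proposition \ref{sap2}. The only differences are cosmetic (your choice of which index carries the perfect derivative), so the argument is correct and matches the paper.
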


\begin{proof}[Sketch of the proof]
The proof goes as follows: first we multiply the equation by $\Delta^2 v$ and integrate by parts. As in the proof of Proposition \ref{propo4}, we have to integrate by parts (several times) to find a perfect derivative hidden in the \emph{medium order} terms. Once this structure of perfect derivative is obtained, we can integrate by parts once again and find terms akin to
$$
\int\partial(v^n\partial v)|\partial^3 v|^2.
$$
These terms can be handled as in Proposition \ref{propo4}.
\end{proof}

\begin{proof}[Proof of Proposition \ref{propo4pb2}]

Again, we write $v$ instead of $v_N$ and omit the time variable if not needed. We multiply the main equation in \eqref{2v} by $\D^2 v$ obtaining
\begin{align}
\frac{1}{2}\frac{d}{dt}\int_{\TT}|\D v|^2\,dx&=\int_{\TT}\D^2\left(
\sum_{n=0}^N\binom{n+2}{n}(-v)^n
\right)\D^2 v\,dx\nonumber\\
&=
-3\int_\TT \sum_{n=1}^N\binom{n+2}{n-1}(-v)^{n-1}|v,_{jjrr}|^2\,dx
\nonumber\\
&\q+12\int_{\TT}\sum_{n=2}^N\binom{n+2}{n-2}(-v)^{n-2}\left[
4v,_{ii\ell}v,_{\ell}+|v,_{ii}|^2+2v,_{i\ell}v,_{i\ell}
\right]\,dx
\nonumber\\
&\q-60\int_{\TT}\sum_{n=2}^N\binom{n+2}{n-3}(-v)^{n-3}\left[
v,_{ii}v,_\ell v,_\ell+2v,_{i\ell}v,_\ell v,_i
\right]\,dx\nonumber\\
&\q+360\int_{\TT}\sum_{n=2}^N\binom{n+2}{n-4}(-v)^{n-4}v,_iv,_i v,_\ell v,_\ell
v,_{jjrr}\,dx\nonumber\\
&=-3\int_\TT \sum_{n=1}^N\binom{n+2}{n-1}(-v)^{n-1}v,_{jjrr}\,dx+\sum_{m=1}^{6}I_m.\label{eqreg}
\end{align}
Again, we have that 
\[
-\int_\TT \sum_{n=1}^N\binom{n+2}{n-1}(-v)^{n-1}|v,_{jjrr}|^2\,dx<0
\]
for $N$ large since
\[
\sum_{n=1}^N\binom{n+2}{n-1}(-v)^{n-1}-\frac{1}{(1+v)^4}\underset{N\to\infty}{\longrightarrow} 0.
\]
Recalling that $|v_0|_0<1$ and using also \eqref{smart1}, we estimate
\begin{align}
I_2+I_3&=12\int_{\TT}\sum_{n=2}^N\binom{n+2}{n-2}(-v)^{n-2}
\left(|v,_{ii}|^2+2v,_{i\ell}v,_{i\ell}\right)v,_{jjrr}\,dx\nonumber\\
&\le c(1-|v|_0)^{-5} \|v\|_{ \mH^2}^2|v|_4,\nonumber\\
I_4+I_5&=-60\int_{\TT}\sum_{n=3}^N\binom{n+2}{n-3}(-v)^{n-3}\left(v,_{ii}v,_\ell v,_\ell+2v,_{i\ell}v,_\ell v,_i \right)v,_{jjrr}\,dx\nonumber\\
&\le c(1-|v|_0)^{-6}\|v\|_{ \mH^2}^2|v|_4,\nonumber
\\
I_6&=360\int_{\TT}\sum_{n=4}^N\binom{n+2}{n-4}(-v)^{n-4}v,_iv,_i v,_\ell v,_\ell v,_{jjrr}\,dx\nonumber\\
&\le c(1-|v|_0)^{-7}\|v\|_{ \mH^2}^2|v|_4.\nonumber
\end{align}
Here, we have also used  the computations contained in Proposition \ref{sap2}. 
We are left with $I_1$:
\[
I_1=48\int_{\TT}\sum_{n=2}^N\binom{n+2}{n-2}(-v)^{n-2}v,_{ii\ell}v,_{\ell}v,_{jjrr}\,dx.
\]
In this later term we have to find a perfect $\ell-$derivative structure that allow us to close the estimates. To do that we have to integrate by parts three times (in $i$, $r$ and $j$). Integrating by parts in $r$, we compute
\begin{align}
\frac{I_1}{48}&= -\int_{\TT} \left(\left(\sum_{n=2}^N\binom{n+2}{n-2}(-v)^{n-2}\right)v,_{ii\ell}v,_{\ell}\right),_rv,_{jjr}\,dx\nonumber\\
&=\int_{\TT} v,_{jjr}\left[ \left(5\sum_{n=3}^N\binom{n+2}{n-3}(-v)^{n-3}\right)v,_{ii\ell}v,_{\ell}v,_r-\left(\sum_{n=2}^N\binom{n+2}{n-2}(-v)^{n-2}\right)v,_{ii\ell}v,_{\ell r}  \right]\,dx\nonumber\\
&\q-\int_{\TT} \left(\sum_{n=2}^N\binom{n+2}{n-2}(-v)^{n-2}\right)v,_{ii\ell r}v,_{\ell}v,_{jjr}\,dx.\nonumber
\end{align}
Now we integrate by parts in $i$:
\begin{align}
\frac{I_1}{48}&=\int_{\TT} v,_{jjr}\left[ \left(5\sum_{n=3}^N\binom{n+2}{n-3}(-v)^{n-3}\right)v,_{ii\ell}v,_{\ell}v,_r -\left(\sum_{n=2}^N\binom{n+2}{n-2}(-v)^{n-2}\right)v,_{ii\ell}v,_{\ell r}  \right]\,dx\nonumber\\
&\q+\int_{\TT} \left(\left(\sum_{n=2}^N\binom{n+2}{n-2}(-v)^{n-2}\right)v,_{\ell}v,_{jjr}\right),_iv,_{i\ell r}\,dx\nonumber\\
&=\int_{\TT} v,_{jjr}\left[ \left(5\sum_{n=3}^N\binom{n+2}{n-3}(-v)^{n-3}\right)v,_{ii\ell}v,_{\ell}v,_r -\left(\sum_{n=2}^N\binom{n+2}{n-2}(-v)^{n-2}\right)v,_{ii\ell}v,_{\ell r}  \right]\,dx\nonumber\\
&\q+\int_{\TT} \left(\left(\sum_{n=2}^N\binom{n+2}{n-2}(-v)^{n-2}\right)v,_{\ell}\right),_iv,_{jjr}v,_{i\ell r}\,dx \nonumber\\
&\q +\int_{\TT} \left(\sum_{n=2}^N\binom{n+2}{n-2}(-v)^{n-2}\right)v,_{\ell}v,_{jjri}v,_{i\ell r}\,dx.\nonumber
\end{align}
Integrating by parts in $j$, we find that
\begin{align}
\frac{I_1}{48}&=\int_{\TT} v,_{jjr}\left[ \left(5\sum_{n=3}^N\binom{n+2}{n-3}(-v)^{n-3}\right)v,_{ii\ell}v,_{\ell}v,_r-\left(\sum_{n=2}^N\binom{n+2}{n-2}(-v)^{n-2}\right)v,_{ii\ell}v,_{\ell r}  \right]\,dx\nonumber\\
&\q+\int_{\TT} \left(\left(\sum_{n=2}^N\binom{n+2}{n-2}(-v)^{n-2}\right)v,_{\ell}\right),_iv,_{jjr}v,_{i\ell r}\,dx \nonumber\\
&\q -\int_{\TT} \left(\left(\sum_{n=2}^N\binom{n+2}{n-2}(-v)^{n-2}\right) v,_{i\ell r}v,_{\ell}\right),_jv,_{jri}\,dx\nonumber\\
&=\int_{\TT} v,_{jjr}\left[ \left(5\sum_{n=3}^N\binom{n+2}{n-3}(-v)^{n-3}\right)v,_{ii\ell}v,_{\ell}v,_r -\left(\sum_{n=2}^N\binom{n+2}{n-2}(-v)^{n-2}\right)v,_{ii\ell}v,_{\ell r}  \right]\,dx\nonumber\\
&\q+\int_{\TT} \left(\left(\sum_{n=2}^N\binom{n+2}{n-2}(-v)^{n-2}\right)v,_{\ell}\right),_iv,_{jjr}v,_{i\ell r}\,dx\nonumber\\
&\q  -\int_{\TT} \left(\left(\sum_{n=2}^N\binom{n+2}{n-2}(-v)^{n-2}\right)v,_{\ell}\right),_j v,_{i\ell r}v,_{jri}\,dx\nonumber\\
&\q-\int_{\TT} \left(\sum_{n=2}^N\binom{n+2}{n-2}(-v)^{n-2}\right)v,_{\ell} v,_{i\ell rj}v,_{jri}\,dx\nonumber\\
&=\int_{\TT} v,_{jjr}\left[ \left(5\sum_{n=3}^N\binom{n+2}{n-3}(-v)^{n-3}\right)v,_{ii\ell}v,_{\ell}v,_r -\left(\sum_{n=2}^N\binom{n+2}{n-2}(-v)^{n-2}\right)v,_{ii\ell}v,_{\ell r}  \right]\,dx\nonumber\\
&\q+\int_{\TT} \left(\left(\sum_{n=2}^N\binom{n+2}{n-2}(-v)^{n-2}\right)v,_{\ell}\right),_iv,_{jjr}v,_{i\ell r}\,dx\nonumber\\
&\q  -\int_{\TT} \left(\left(\sum_{n=2}^N\binom{n+2}{n-2}(-v)^{n-2}\right)v,_{\ell}\right),_j v,_{i\ell r}v,_{jri}\,dx\nonumber\\
&\q+\frac{1}{2}\int_{\TT} \left(\left(\sum_{n=2}^N\binom{n+2}{n-2}(-v)^{n-2}\right)v,_{\ell}\right),_\ell v,_{i rj}v,_{jri}\,dx.\nonumber
\end{align}
Thus,
\begin{align}
I_1&=48\int_{\TT} v,_{jjr}\left[ \left(5\sum_{n=3}^N\binom{n+2}{n-3}(-v)^{n-3}\right)v,_{ii\ell}v,_{\ell}v,_r-\left(\sum_{n=2}^N\binom{n+2}{n-2}(-v)^{n-2}\right)v,_{ii\ell}v,_{\ell r}  \right]\,dx\nonumber\\
&\q+48\int_{\TT} \left(\left(\sum_{n=2}^N\binom{n+2}{n-2}(-v)^{n-2}\right)v,_{\ell}\right),_iv,_{jjr}v,_{i\ell r}\,dx\nonumber\\
&\q-48\int_{\TT} \left(\left(\sum_{n=2}^N\binom{n+2}{n-2}(-v)^{n-2}\right)v,_{\ell}\right),_j v,_{i\ell r}v,_{jri}\,dx\nonumber\\
&\q+24\int_{\TT} \left(\left(\sum_{n=2}^N\binom{n+2}{n-2}(-v)^{n-2}\right)v,_{\ell}\right),_\ell v,_{i rj}v,_{jri}\,dx.\nonumber
\end{align}
We just deal with 
\[
\int_{\TT}  v,_{jjr}  \left(\sum_{n=3}^N\binom{n+2}{n-3}(-v)^{n-3}\right)v,_{ii\ell}v,_{\ell}v,_r\,dx,
\]
since the other terms can be studied in the same way. We have that
\begin{align}
\int_{\TT} v,_{jjr}  \left(\sum_{n=3}^N\binom{n+2}{n-3}(-v)^{n-3}\right)v,_{ii\ell}v,_{\ell}v,_r\,dx&\le (1-|v(t)|_0)^{-6}
\int_{\TT} |v,_{jjr} || v,_{ii\ell}||v,_{\ell}||v,_r|\,dx\nonumber\\
&\le (1-|v|_0)^{-6}\|\N v\|_{\mL^4}^2 \|\N \D v\|_{\mL^4}^2.\nonumber
\end{align}
Then, applying \eqref{smart1} on $\|\N v(t)\|_{\mL^4}^2$ and \eqref{smart2} on $\|\N \D v(t)\|_{\mL^4}^2$, we get
\begin{equation*}
\int_{\TT} v,_{jjr}  \left(\sum_{n=3}^N\binom{n+2}{n-3}(-v)^{n-3}\right)v,_{ii\ell}v,_{\ell}v,_r\,dx\le c (1-|v|_0)^{-6}
 \|  v\|_{ \mH^2}^2|v|_4.
\end{equation*}
Finally
\[
I_1\le c (1-|v|_0)^{-6}
 \|  v\|_{ \mH^2}^2|v|_4.
\]

We come back to the notation $v_N$, reintroduce the $t$ in the notation and gather the previous estimates in the following inequality:
\[
\frac{d}{dt}\|v_N(t)\|_{\mH^2}^2\le 
c (1-|v_N(t)|_0)^{-7}
\left(
(1-|v_N(t)|_0)^2+(1-|v_N(t)|_0)+1
\right)
 \|  v_N(t)\|_{ \mH^2}^2|v_N(t)|_4
\]
from which
\begin{equation*} 
\esssup_{0\leq t\leq T}\|v_N(t)\|_{ \mH^2}^2\le e^{c(|v_0|_0)\int_0^t|v_N(s)|_4\,ds}\|v_0\|_{ \mH^2}^2\le c(|v_0|_0),
\end{equation*}
thanks to Gronwall's inequality.

The uniform boundedness in $\mL^2(0,T;\mH^4(\TT))$ follows from  \eqref{eqreg} and from the inequality
\[
\sum_{n=1}^N\binom{n+2}{n-1}(-v_N)^{n-1}-(1+v)^{-4}+(1+v)^{-4}>(1+v)^{-4}-\eps>0,
\]
which holds for $N$ large and $0<\eps\ll1$, since
\begin{align}
&3\left[(1+\|v_N\|_{\mL^\infty(\mL^\infty)})^{-4}-\eps\right]\int_0^T\int_\TT |(v_N),_{jjrr}|^2\,dx\,dt \nonumber\\
&\le 
3\int_0^T\int_\TT \sum_{n=1}^N\binom{n+2}{n-1}(-v_N)^{n-1}|(v_N),_{jjrr}|^2\,dx\,dt\nonumber\\
&\le \int_0^T \sum_{m=1}^{6}I_m(t)\,dt<c.\nonumber
\end{align}
\end{proof}

\subsection{Convergence of the approximate problems}
\begin{proposition}[Compactness results]\label{compacidad3}
Let $v_0\in \mA^0(\TT)\cap \mH^2(\TT)$ be a function satisfying the condition in \eqref{small2} and $\partial$ be any differential operator of order one. Then, up to subsequences, we  have that every approximating sequence of solutions $\{v_N\}_N$ of \eqref{pbv2reg} verifies
\begin{equation}\label{L2Hr2}
\{ v_N\}_N\rightarrow v\q \text{ in }\mL^2(0,T;\mH^r(\TT)),\;0\leq r<4.
\end{equation}
Furthermore, the limit function $v$ satisfies
$$
v\in\mathscr{C}([0,T];\mH^2(\TT)).
$$
\end{proposition}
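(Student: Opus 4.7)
The plan is to mirror the argument of Proposition \ref{propo5}, using the Sobolev bounds from Proposition \ref{propo4pb2} in place of those from Proposition \ref{propo4}, and upgrading the lower-order compactness already secured in Proposition \ref{compacidad2}.

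First, by Proposition \ref{propo4pb2} the family $\{v_N\}_N$ is uniformly bounded in $\mL^\infty(0,T;\mH^2(\TT))\cap\mL^2(0,T;\mH^4(\TT))$, so in particular $\{\Delta v_N\}_N$ is uniformly bounded in $\mL^2(0,T;\mH^2(\TT))$. To apply an Aubin--Lions type result at this higher level of regularity, I would next estimate $\partial_t\Delta v_N$ in $\mL^2(0,T;\mH^{-2}(\TT))$. Applying $\Delta$ to \eqref{pbv2reg} and testing against $\varphi\in\mH^2(\TT)$ with $\|\varphi\|_{\mH^2}\le 1$, one gets, after integrating by parts and using $|v_N|_0<1$, expressions of the same type as those appearing in the proof of Proposition \ref{compacidad2}, but with one extra $\Delta$. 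The geometric series manipulations already carried out there (bounds of the form $\sum (j+2)(j+1)j\cdots w^{j-\ell}\leq C(1-w)^{-(4+\ell)}$) together with the Gagliardo--Nirenberg-type inequalities \eqref{smart1}--\eqref{smart2} lead to a pointwise-in-time bound of the form
\[
\|\partial_t\Delta v_N(t)\|_{\mH^{-2}}\le \frac{C}{(1-|v_N(t)|_0)^{p}}\|v_N(t)\|_{\mH^4},
\]
for some $p$, which is square-integrable in time thanks to $\|v_N\|_{\mL^2(\mH^4)}\le C$ and $|v_N|_0\le|v_0|_0<1$.

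With this in hand, I apply \cite[Corollary 4]{S} to $\{\Delta v_N\}_N$ with the triple $X_0=\mH^2(\TT)$, $X=\mL^2(\TT)$, $X_1=\mH^{-2}(\TT)$, which yields $\Delta v_N\to\Delta v$ strongly in $\mL^2(0,T;\mL^2(\TT))$, i.e., strong convergence of $\{v_N\}_N$ to $v$ in $\mL^2(0,T;\mH^2(\TT))$. Interpolating between this strong convergence and the uniform bound in $\mL^2(0,T;\mH^4(\TT))$,
\[
\int_0^T\|v_N(t)-v(t)\|_{\mH^r}^2\,dt\le \left(\int_0^T\|v_N(t)-v(t)\|_{\mH^2}^2\,dt\right)^{\frac{4-r}{2}}\left(\int_0^T\|v_N(t)-v(t)\|_{\mH^4}^2\,dt\right)^{\frac{r-2}{2}},
\]
for $2\le r<4$, upgrades this to strong convergence in $\mL^2(0,T;\mH^r(\TT))$ for all such $r$. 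Combined with \eqref{comp5b} from Proposition \ref{compacidad2}, which covers $0\le r<2$, this establishes \eqref{L2Hr2}.

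Finally, for the continuity statement, the information $\Delta v\in\mL^2(0,T;\mH^2(\TT))$ together with $\partial_t\Delta v\in\mL^2(0,T;\mH^{-2}(\TT))$ gives $\Delta v\in\mathscr{C}([0,T];\mL^2(\TT))$ by the standard Lions--Magenes embedding, hence $v\in\mathscr{C}([0,T];\mH^2(\TT))$. The main obstacle is the time-derivative estimate in $\mH^{-2}$: after applying $\Delta$ to the series representation one has to expand $\Delta^3(v_N^j)$ and control the resulting monomials in derivatives, tracking the combinatorial factors so that the power series in $|v_N|_0$ still sums to a bound of the form $(1-|v_N|_0)^{-p}$. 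This is the direct analogue of the argument in Proposition \ref{compacidad2} at one higher order of differentiation.
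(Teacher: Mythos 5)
Your proof is correct, but for the strong convergence \eqref{L2Hr2} it takes a genuinely different and somewhat heavier route than the paper. The paper does not re-run the Aubin--Lions/Simon argument at the $\mH^2$ level at all: it simply interpolates the $\mH^r$ norm of $v_N-v$ between $\mL^2$ and $\mH^4$, and then uses the strong convergence $v_N\to v$ in $\mL^2(0,T;\mL^2(\TT))$ already obtained in Proposition \ref{compacidad2} together with the uniform bound in $\mL^2(0,T;\mH^4(\TT))$ from Proposition \ref{propo4pb2}; this covers the whole range $0\leq r<4$ in one step and requires no new time-derivative estimate for the convergence claim. Your approach instead establishes the additional bound $\pat\D v_N\in \mL^2(0,T;\mH^{-2}(\TT))$, applies \cite[Corollary 4]{S} to $\{\D v_N\}_N$ to get strong convergence in $\mL^2(0,T;\mH^2(\TT))$, and then interpolates between $\mH^2$ and $\mH^4$ for $2\leq r<4$. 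This costs you the (nontrivial but achievable) estimate on $\pat\D v_N$, which the interpolation route avoids; on the other hand, that very estimate is exactly what both you and the paper need anyway for the continuity claim $v\in\mathscr{C}([0,T];\mH^2(\TT))$ -- the paper merely asserts $\pat\D v\in\mL^2(0,T;\mH^{-2}(\TT))$ without detail, whereas you sketch how to obtain it -- so your version is more self-contained on that point. One small bonus of your comparison: the interpolation exponents you write for the range $2\le r<4$ are consistent, while the displayed exponents in the paper's own proof (copied from the $\mL^2$--$\mH^2$ interpolation of Proposition \ref{propo2}) contain a typo, since $\tfrac{2-r}{2}$ is negative for $r>2$; the correct exponents for interpolating between $\mL^2$ and $\mH^4$ are $\bigl(\int_0^T\|v_N-v\|_{\mL^2}^2\,dt\bigr)^{\frac{4-r}{4}}\bigl(\int_0^T\|v_N-v\|_{\mH^4}^2\,dt\bigr)^{\frac{r}{4}}$.
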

\begin{proof}
The proof of \eqref{L2Hr2} reasoning as in Proposition \ref{propo5} (see also Proposition \ref{propo2}). More precisely, an interpolation in Sobolev spaces and the uniform boundedness in $\mL^2(0,T;\mH^4(\TT))$ lead to
\begin{align}
\int_{0}^T\|v_N(t)-v(t)\|_{\mH^r}^2\,dt&\leq \int_{0}^T\|v_N(t)-v(t)\|_{\mL^2}^{2-r}\|v_N(t)-v(t)\|_{\mH^4}^{r}\,dt\nonumber\\
&\leq
\left(
\int_{0}^T\|v_N(t)-v(t)\|_{\mL^2}^2\,dt
\right)^\frac{2-r}{2}
\left(
\int_{0}^T\|v_N(t)-v(t)\|_{\mH^4}^2\,dt
\right)^\frac{r}{2}\nonumber
\end{align}
for every $0\leq r<4$. Thus \eqref{L2Hr2} follows thanks also to the strong convergence in $\mL^2(0,T;\mL^2(\TT))$. The continuity follows from 
\[
\D v\in \mL^2(0,T; \mH^2(\TT)),\qq \pat\D v\in \mL^2(0,T; \mH^{-2}(\TT)).
\]
\end{proof}

\subsection{Proof of Theorem \ref{teo2reg}}
The proof of \eqref{reg1b} follows taking advantage of Propositions \ref{propo4pb2} and \ref{compacidad3}. The inequality in \eqref{H2H4b} can be deduced from Proposition \ref{propo4pb2}. Indeed it provides us with 
\[
\frac{1}{2}\int_{\TT}|\D v_N(t)|^2\,dx+3(1-\sup_{t\in[0,T]}|v_N(t)|_0)^{-4}\int_0^T\int_\TT |\D^2v_N|^2\,dx\,dt\le c
\]
and passing to the limit as in Subsection \ref{proofteo2ex}. Finally, the exponential decay \eqref{expHrb} follows as in Subsection \ref{proofreg1}.


\section*{Acknowledgements}The first author was partially supported by the LABEX MILYON (ANR-10-LABX-0070) of Universit\'e de Lyon, within the program "Investissements d'Avenir" (ANR-11-IDEX-0007) operated by the French National Research Agency (ANR). Part of the research leading to results presented here was conducted during a short stay of the second author at Institut Camille Jordan.



\begin{thebibliography}{10}

\bibitem{Ambrose} David Ambrose 
\newblock The radius of analyticity for solutions to a problem in epitaxial growth on the torus
\newblock {\em arXiv preprint  arXiv:1807.01740}, 2018.

\bibitem{bae2016global}
H. Bae, R. Granero-Belinch{\'o}n and Omar Lazar,
\newblock Global existence of weak solutions to dissipative transport equations
  with nonlocal velocity,
\newblock {\em Nonlinearity}, 31 (4), (2018), 1484--1515.

\bibitem{BruellGranero2018}
G. Bruell and R. Granero-Belinchón,
\newblock On the thin film {M}uskat and the thin film {S}tokes equations,
\newblock (2018), {\em Arxiv preprint arXiv:1802.05509 [math.AP]}.

\bibitem{burczak2016generalized}
J. Burczak and R. Granero-Belinch{\'o}n,
\newblock On a generalized doubly parabolic keller--segel system in one spatial
  dimension,
\newblock {\em Mathematical Models and Methods in Applied Sciences},
  26 (01), (2016), 111--160.

\bibitem{constantin2016muskat}
P. Constantin, D. C{\'o}rdoba, F. Gancedo, L. Rodriguez-Piazza
  and R.~M. Strain,
\newblock On the muskat problem: global in time results in 2d and 3d,
\newblock {\em American Journal of Mathematics}, 138 (6), (2016), 1455--1494.

\bibitem{cordoba2007contour}
D. C{\'o}rdoba and F. Gancedo,
\newblock Contour dynamics of incompressible 3-d fluids in a porous medium with
  different densities,
\newblock {\em Communications in Mathematical Physics}, 273 (2) (2007), 445--471.

\bibitem{gancedo2017muskat}
F. Gancedo, E. Garcia-Juarez, N. Patel and R.~M. Strain,
\newblock On the muskat problem with viscosity jump: Global in time results,
\newblock (2017), {\em arXiv preprint arXiv:1710.11604}.

\bibitem{gao2017vicinal}
Y. Gao, H. Ji, J.-G. Liu and T.~P. Witelski,
\newblock A vicinal surface model for epitaxial growth with logarithmic free
  energy,
\newblock (2017), {\em arXiv preprint arXiv:1705.02924}.

\bibitem{gao2017weak}
Y. Gao, J.-G. Liu and J. Lu,
\newblock Weak solution of a continuum model for vicinal surface in the
  attachment-detachment-limited regime,
\newblock {\em SIAM Journal on Mathematical Analysis}, 49 (3), (2017), 1705--1731.

\bibitem{gao2017gradient}
Y. Gao, J.-G. Liu and Xin~Yang Lu,
\newblock Gradient flow approach to an exponential thin film equation: global
  existence and latent singularity,
\newblock (2017), {\em arXiv preprint arXiv:1710.06995}.

\bibitem{giga2010scale}
Y. Giga and R.~V. Kohn,
\newblock Scale-invariant extinction time estimates for some singular diffusion
  equations,
\newblock {\em Hokkaido University Preprint Series in Mathematics}, (2010), 963.

\bibitem{krug1995adatom}
J.~Krug, H.T.~Dobbs and S.~Majaniemi,
\newblock Adatom mobility for the solid-on-solid model.
\newblock {\em Zeitschrift f{\"u}r Physik B Condensed Matter}, 97 (2), (1995), 281--291.

\bibitem{liu2016existence}
J.-G. Liu and X. Xu,
\newblock Existence theorems for a multidimensional crystal surface model,
\newblock {\em SIAM Journal on Mathematical Analysis}, 48 (6), (2016), 3667--3687.

\bibitem{liu2017analytical}
J.G. Liu and X. Xu,
\newblock Analytical validation of a continuum model for the evolution of a
  crystal surface in multiple space dimensions,
\newblock {\em SIAM Journal on Mathematical Analysis}, 49 (3), (2017), 2220--2245.

\bibitem{LS} Jian-Guo Liu and Robert M. Strain. 
\newblock Global stability for solutions to the exponential PDE describing epitaxial growth.
\newblock {\em arXiv preprint  arXiv:1805.02246}, 2018.

\bibitem{marzuola2013relaxation}
J.~L. Marzuola and J. Weare,
\newblock Relaxation of a family of broken-bond crystal-surface models,
\newblock {\em Physical Review E}, 88 (3), (2013), 032403.

\bibitem{patel2017large}
N. Patel and R.~M. Strain,
\newblock Large time decay estimates for the muskat equation,
\newblock {\em Communications in Partial Differential Equations},
  42 (6), (2017), 977--999.

\bibitem{shehadeh2011evolution}
H. A. H. Shehadeh, R.~V. Kohn and J. Weare,
\newblock The evolution of a crystal surface: Analysis of a one-dimensional
  step train connecting two facets in the adl regime,
\newblock {\em Physica D: Nonlinear Phenomena}, 240 (21), (2011), 1771--1784.

\bibitem{S}
J. Simon,
\newblock {C}ompact sets in the space {$L^{p}(O, T; B)$},
\newblock {\em Annali di Matematica Pura ed Applicata}, 146 (1), (1986), 65--96.

\bibitem{xu2017existence}
X. Xu,
\newblock Existence theorems for a crystal surface model involving the
  p-laplacian operator,
\newblock (2017), {\em arXiv preprint arXiv:1711.07405}.

\end{thebibliography}
\end{document}